\pgfplotsset{compat=1.18}
\newtheorem{theorem}{Theorem}
\numberwithin{theorem}{section}
\newtheorem{proposition}[theorem]{Proposition}
\newtheorem{definition}[theorem]{Definition}
\newtheorem{remark}[theorem]{Remark}
\newtheorem{example}[theorem]{Example}
\newtheorem{conjecture}[theorem]{Conjecture}
\theoremstyle{definition}
\newcommand{\RR}{\mathbb{R}}
\newcommand{\CC}{\mathbb{C}}
\newcommand{\ZZ}{\mathbb{Z}}
\newcommand{\NN}{\mathbb{N}}
\newcommand{\var}{\alpha}
\newcommand{\eps}{\varepsilon}
\DeclareMathOperator{\Ann}{Ann}
\renewcommand{\d}{\mathrm{d}}
\newcommand{\F}{\mathcal{F}}
\def\treeConefirst{\tikz[baseline=.1ex]{
\fill (0,0.2) circle (3pt) coordinate (A);
\draw (0,0) node [anchor=north][inner sep=0.75pt]  [font=\scriptsize]  {$X_1$};
\draw (0,0.6) node [anchor=north][inner sep=0.75pt]  [font=\scriptsize]  {$1$};
}
}
\def\treeCone{\tikz[baseline=.1ex]{
\fill (0,0.2) circle (3pt) coordinate (A);
\fill (1,0.2) circle (3pt) coordinate (B);
\draw (0,0) node [anchor=north][inner sep=0.75pt]  [font=\scriptsize]  {$X_1$};
\draw (1,0) node [anchor=north][inner sep=0.75pt]  [font=\scriptsize]  {$X_{2}$};
\draw[thick] (A)--(B);
\draw (0,0.6) node [anchor=north][inner sep=0.75pt]  [font=\scriptsize]  {$1$};
\draw (1,0.6) node [anchor=north][inner sep=0.75pt]  [font=\scriptsize]  {$0$};
}
}
\def\treeCtwofirst{\tikz[baseline=.1ex]{
\fill (0,0.2) circle (3pt) coordinate (A);
\draw (0,0) node [anchor=north][inner sep=0.75pt]  [font=\scriptsize]  {$X_2$};
\draw (0,0.6) node [anchor=north][inner sep=0.75pt]  [font=\scriptsize]  {$2$};
}
}
\def\treeCtwosecond{\tikz[baseline=.1ex]{
\fill (0,0.2) circle (3pt) coordinate (A);
\fill (1,0.2) circle (3pt) coordinate (B);
\draw (0,0) node [anchor=north][inner sep=0.75pt]  [font=\scriptsize]  {$X_1$};
\draw (1,0) node [anchor=north][inner sep=0.75pt]  [font=\scriptsize]  {$X_{2}$};
\draw[thick] (A)--(B);
\draw (0,0.6) node [anchor=north][inner sep=0.75pt]  [font=\scriptsize]  {$-1$};
\draw (1,0.6) node [anchor=north][inner sep=0.75pt]  [font=\scriptsize]  {$2$};
}
}
\def\treeCtwothird{\tikz[baseline=.1ex]{
\fill (0,0.2) circle (3pt) coordinate (A);
\fill (1,0.2) circle (3pt) coordinate (B);
\draw (0,0) node [anchor=north][inner sep=0.75pt]  [font=\scriptsize]  {$X_2$};
\draw (1,0) node [anchor=north][inner sep=0.75pt]  [font=\scriptsize]  {$X_{3}$};
\draw[thick] (A)--(B);
\draw (0,0.6) node [anchor=north][inner sep=0.75pt]  [font=\scriptsize]  {$2$};
\draw (1,0.6) node [anchor=north][inner sep=0.75pt]  [font=\scriptsize]  {$-1$};
}
}
\def\treeCthree{\tikz[baseline=.1ex]{
\fill (0,0.2) circle (3pt) coordinate (A);
\draw (0,0) node [anchor=north][inner sep=0.75pt]  [font=\scriptsize]  {$X_2$};
\draw (0,0.6) node [anchor=north][inner sep=0.75pt]  [font=\scriptsize]  {$1$};
}
}
\title{Algebraic Approaches to\\ Cosmological Integrals}
\author{Claudia Fevola}
\address{%
{\small  Université Paris-Saclay, Inria, Palaiseau, France\\
\email{claudia.fevola@inria.fr}}
}
\author{Guilherme L.\ Pimentel}
\address{
{\small Scuola Normale Superiore and INFN, Pisa, Italy\\
\email{guilherme.leitepimentel@sns.it}}
}
\author{Anna-Laura Sattelberger}
\address{{\small Max Planck Institute for Mathematics in the Sciences, Leipzig, Germany\\
\email{anna-laura.sattelberger@mis.mpg.de}}
}
\author{Tom Westerdijk}
\address{ {\small Scuola Normale Superiore and INFN, Pisa, Italy\\
\email{tom.westerdijk@sns.it}}}
\begin{document}
\maketitle
\begin{abstract}
\noindent Cosmological correlators encode
statistical properties of the initial conditions of our universe. Mathematically, they can often be written as Mellin integrals of  a certain rational function associated to graphs, namely the flat
space wavefunction. The singularities of these cosmological integrals are parameterized by binary hyperplane arrangements. Using different algebraic tools, we shed light on the differential and difference equations satisfied by these integrals. Moreover, we study a multivariate version of partial fractioning of the flat space wavefunction, and propose a graph-based algorithm to compute this decomposition.
\end{abstract}

\section{Introduction}\label{sec:intro}
Cosmological correlation functions are central to the study and characterization of the very early universe. They encode the statistical properties of the initial density inhomogeneities, 
around which structures cluster, giving rise to 
galaxies, stars, planets, etc.
Computing cosmological correlators requires to integrate their time evolution in the early universe. The resulting cosmological integrals are closely analogous to Feynman integrals from particle physics. In this paper, we investigate how various algebraic techniques can shed light on the structure of cosmological integrals and their differential equations.

The cosmological integral is the multi-dimensional Mellin integral
\begin{equation}\label{eq:cosmoint}
    \psi_{\varepsilon}(X,Y)\,=\,\int_{\mathbb{R}^n_{>0}} \psi_{\rm flat}(X+\var,Y) \var^\varepsilon  \, \d  \var  \, ,
\end{equation}
where the integration variable $\var$ parameterizes the time in which a physical process happens.  
Here, $\psi_{\rm flat}$ is the wavefunction in flat space where there is no cosmological time evolution. 
It has been studied extensively in the past few years \cite{cosmowave}. Interestingly, it can be extracted from the canonical form of a cosmological polytope \cite{cosmowave,cosmopol,JKSV23}. For our purposes, it is enough to know that it is a rational function of kinematics, thus resembling the product of propagators that is typical for Feynman integrals. The variables $X$ and $Y$ summarize these kinematics and we will sometimes refer to these variables as ``energies.'' 

An example that already illustrates the challenge of computing these integrals, is the single-exchange
process, with $X_1$ and $X_2$ being the external outgoing energies, while $Y$ is the exchanged energy in the process; the integrand is 
\begin{align*}
    \psi_{\rm flat}(X_1,X_2,Y) \,=\,\frac{2Y}{(X_1+Y)(X_2+Y)(X_1+X_2)} \, .
\end{align*}
The numerator $2Y$ normalizes the residues of the poles of the integrand to be~$\pm 1$, doing justice to its origin as a canonical form.
In this case, as we have two external energies, there will be two integration variables.

Cosmological correlators can be represented by diagrams in spacetime or in kinematic space, some examples of which we show in \Cref{fig:23sitestar}.
\vspace{2mm}
\begin{figure}[h]
\begin{center}
\raisebox{.44cm}{\begin{tikzpicture}[scale=.419, transform shape]
				\tikzstyle{grid lines}=[lightgray,line width=0]
				\tikzstyle{every node}=[circle, draw, fill=black, inner sep=.1pt, minimum width=.1pt] node{};
    \draw[->,black,thick] (-4,-3) -- (-4,1.1);
    \node[draw=none,fill=none] at (-5,-.25) {\huge{time}};
				\draw[black,thick] (-1.5,-1) -- (1.5,-1); 
				\draw[gray!80,very thick] (-3,1) -- (3,1); 
				\draw[gray!80,thick] (-2.4,1) -- (-1.5,-1) -- (-.6,1); 			
				\draw[gray!80,thick] (.6,1) -- (1.5,-1) -- (2.4,1); 
				\node at (-1.5,-1) {X1};
				\node at (1.5,-1) {X2};
				\node[draw=none,fill=none] at  (-1.45,-1.75) {\huge $X_1$};
				\node[draw=none,fill=none] at  (1.55,-1.75) {\huge $X_2$};
				\node[draw=none,fill=none] at  (0,-.55) {\huge $Y$};
\end{tikzpicture}} \quad 
\raisebox{.781cm}{\begin{tikzpicture}[scale=.42, transform shape]
				\tikzstyle{grid lines}=[lightgray,line width=0]
				\tikzstyle{every node}=[circle, draw, fill=black, inner sep=.1pt, minimum width=.1pt] node{};
				\draw[black,thick] (-2.5,-1) -- (2.5,-1); 
				\draw[gray!80,very thick] (-4,1) -- (4,1); 
				\draw[gray!80,thick] (-3.2,1) -- (-2.5,-1) -- (-1.8,1); 		
    	      \draw[gray!80,thick] (1.8,1) -- (2.5,-1) -- (3.2,1); 	
				\draw[gray!80,thick] (0,1) -- (0,-1);
				\node at (-2.5,-1) {X1};
                \node at (0,-1) {X2};
				\node at (2.5,-1) {X3};
				\node[draw=none,fill=none] at  (-2.45,-1.75) {\huge $X_1$};
				\node[draw=none,fill=none] at  (2.55,-1.75) {\huge $X_3$};
                \node[draw=none,fill=none] at  (0,-1.75) {\huge $X_2$};
				\node[draw=none,fill=none] at  (-1.2,-.55) {\huge $Y_{12}$};
                \node[draw=none,fill=none] at  (1.25,-.55) {\huge $Y_{23}$};
\end{tikzpicture}} \quad
\raisebox{0.059cm}{\begin{tikzpicture}[scale=.42, transform shape]
				\tikzstyle{grid lines}=[lightgray,line width=0]
                \node[label=west:\huge{$X_1$},circle, draw, fill=black, inner sep=.1pt, minimum width=.1pt] (X1) at (-2,-.3) {X1};
				\node[label=east:\huge{$X_3$},circle, draw, fill=black, inner sep=.1pt, minimum width=.1pt] (X3) at (2,-.3) {X3};
                \node[label=below:\huge{$X_4$},circle, draw, fill=black, inner sep=.1pt, minimum width=.1pt] (X4) at (0,-1.8) {X4};
                \node[label=west:\huge{$X_2$},circle, draw, fill=black, inner sep=.1pt, minimum width=.1pt] (X2) at (0,.7) {X2};
                \node at (-1.5,-1.35) {\huge{$Y_{14}$}};
				\node at (1.7,-1.35) {\huge{$Y_{34}$}};
                \node at (.6,-.55){\huge{$Y_{24}$}};
                
                \draw[gray!80,very thick] (-4,2) -- (4,2);
                \draw[gray!80,thick] (-3.1,2) -- (X1) -- (-1.7,2);
                \draw[gray!80,thick] (-.7,2) -- (X2) -- (.7,2);
                 \draw[gray!80,thick] (1.7,2) -- (X3) -- (3.1,2);
                \draw[thick,black,label=west:\huge{$Y_{14}$}] (X1) -- (X4) -- (X2);
                 \draw[thick,black] (X3) -- (X4) ;
\end{tikzpicture}}
\end{center}
\vspace{-.5cm}
\caption{A single and a double exchange process, and the $4$-site star graph.}
\label{fig:23sitestar}
\end{figure}
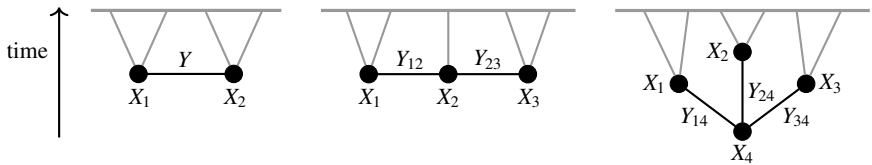

In the figure, time progresses upwards, so the physical picture for the left-most diagram is of a pair of particles being created with energy $Y$, which subsequently decay into two new pairs of particles, with energies $X_1$ and~$X_2$. Their quantum mechanical wavefunction is described by~\eqref{eq:cosmoint}, where we associate a different integrand to each diagram. For practical applications, the $(X,Y)$-variables are real and positive, though it is useful to think of them in larger domains using analytic continuation.

To integrate time, the energies associated to vertices, $X$, have to be shifted by the $\var$-variables. This way, the flat space wavefunction carves up the integration space along hyperplanes. The hyperplane arrangement is
crucial for determining the analytic structure of the integrals. The exponents of $\var$ appearing in the integrand are considered to be the same, single variable $\varepsilon$.
It sets the cosmology in which the process takes place. For example, $\varepsilon=0$ corresponds to an accelerated universe (called ``de Sitter space'' in the physics literature, often a good approximation to cosmic inflation), while $\varepsilon=-1$ recovers the original integrand, as the cosmology will be that of flat space. Other interesting examples are $\varepsilon=-2$ and $\varepsilon=-3$, that parameterize a universe filled with matter and radiation, respectively. Non-integer values of $\varepsilon$ are also interesting for physics, so we consider $\varepsilon$ to be a generic real variable. 

To summarize, cosmological integrals have a structure that is similar to Mellin integrals, with all Mellin variables being equal to $\varepsilon$. The integrand $\psi_{\rm flat}$ is singular only along a hyperplane arrangement that depends on the kinematic variables. This hyperplane arrangement is non-generic, as the coefficients of the integration variables are all zero or one.

Although challenging, the integrals are amenable to many techniques in  
nonlinear algebra, some of which we will establish and apply in this article. We will focus on differential and difference equations for the cosmological correlators. Our goal here is exploratory: we will present a few connections, and open up directions of investigation, posing new questions and conjectures. These connections are promising, and we hope that with further investigation, they will trigger deeper mathematical insights, importing techniques developed in physics, and also enhance the physicist's mathematical toolkit to tackle these fundamental questions.

\medskip

\noindent{\bf Outline.}
\Cref{sec:tools} presents methods from the algebraic theory of linear differential equations as well as their discrete counterpart: shift operators. In \Cref{sec:cosmology}, we apply our methods to cosmological integrals. For the single exchange diagram, we explain the differential equations constructed in \cite{DEcosmological}, in terms of restricted GKZ systems, and moreover systematically construct all recurrence relations for the correlator. Furthermore, we conjecture a multivariate partial fraction decomposition of the flat space wavefunction for arbitrary graphs, which at the same time recovers the physically relevant singularities. \Cref{sec:outlook} summarizes our findings and outlines open questions.

\section{Mathematical tools}\label{sec:tools}
In this section, we introduce the mathematical toolkit that we will use to analyze cosmological integrals.
\subsection{The Weyl algebra}
Let $D_n$ (or just $D$) denote the $n$-th Weyl algebra $D_n=\CC[x_1,\ldots,x_n]\langle \partial_{x_1},\ldots,\partial_{x_n}\rangle$ in the variables $x=(x_1,\ldots,x_n)$. All generators are assumed to commute, except $x_i$ and $\partial_{x_i}$; they obey Leibniz's rule, i.e., $\partial_{x_i}x_i-x_i\partial_{x_i}=1$, $i=1,\ldots,n$. Speaking about $D$-ideals, we will always mean \underline{left} $D$-ideals. We will denote by $R_n=\CC(x_1,\ldots,x_n)\langle \partial_{x_1},\ldots,\partial_{x_n}\rangle$ the rational Weyl algebra. $D_n$ gathers linear differential operators with polynomial coefficients, and $R_n$ allows for coefficients in the field of rational functions. We denote the action of a differential operator $P\in D_n$ on a function $f(x_1,\ldots,x_n)$ as $P\bullet f$. For instance, $\partial_{x_i}\bullet f=\frac{\partial f}{\partial x_i}$, while ${\cdot }$ denotes the product of differential operators, i.e., $\partial_{x_i}\cdot x_i=x_i\partial_{x_i}+1$. 

The {\em holonomic rank} of a $D$-ideal $I$ is the dimension of the quotient $R_n/R_nI$ as a $\CC(x)$-vector space. {\em Holonomic} $D_n$-ideals, i.e., ideals whose characteristic variety is of dimension~$n$, have a finite holonomic rank.  A function $f(x_1,\ldots,x_n)$ is {\em holonomic} if its annihilating $D_n$-ideal $\operatorname{Ann}_{D_n}(f)=\{P\in D_n \,|\, P\bullet f=0\}$ is holonomic as a $D_n$-ideal, see for instance \cite{SatStu19} for a friendly introduction of the~topic.
 
It follows from a theorem of Cauchy, Kovalevskaya, and Kashiwara, that the holonomic rank encodes the dimension of the $\CC$-vector space of holomorphic solutions to the system of PDEs encoded by~$I$---whenever outside $\operatorname{Sing}(I)$, the singular locus of~$I$ (see \cite[Definition 1.12]{SatStu19} for the construction of it).
$D$-ideals of finite holonomic rank do not need to be holonomic, but they can be turned into such by taking the {\em Weyl closure}: The Weyl closure of a $D$-ideal~$I$~is 
\begin{align*}
    W(I) \,=\, R_nI\,\cap\, D_n \,.
\end{align*}
It is again a $D_n$-ideal, it contains~$I$, and it has the same holonomic rank as~$I$.
\begin{remark}
When studying cosmological integrals,
the choice of the most suitable 
tools to use will depend on which parameters of the integrals we vary. The possible choices are explained in detail in~\cite{AFST22}. In \Cref{sec:GKZ_restrictions}, the integrals will be considered as functions of the coefficients of the polynomials defining the integrand, whereas in \Cref{sec:shift_relations}, they will be functions of the exponent~$\eps$. Importantly, the choice of parameters will also determine which Weyl algebra to use. We will specify this choice in each section.
\end{remark}

\subsection{Connection matrices and gauge transformation}
Let $I$ be a $D_n$-ideal of holonomic rank $m$. Let  $(s_1,s_2,\ldots,s_m)$ be a $\CC(x)$-basis of $R_n/R_nI$. The $s_i$'s can be chosen to be monomials in the $\partial_{x_i}$'s, and w.l.o.g., $s_1=1$. By a Gröbner basis reduction of the $\partial_{x_i}s_j$ mod~$I$, one can then read the {\em connection matrices} of $I$. This system is sometimes called a
``Pfaffian system,''\linebreak cf.~\cite[p.~38]{SST00}. For a solution $f$ to~$I$, let $F=(f,s_2\bullet f,\ldots,s_m\bullet f)^\top$. The {connection matrices} of $I$ then are the unique matrices $M_1,\ldots,M_n\in \operatorname{Mat}_{m\times m}(\CC(x))$~s.t.\
\begin{align}\label{eq:connmatrices}
    \partial_{x_i} \bullet F \,=\, M_i\cdot F \,, \qquad i=1,\ldots,n 
\end{align}
for any $f\in \operatorname{Sol}(I)$.
The left-hand side of \eqref{eq:connmatrices} denotes the vector $(\partial_{x_i} \bullet f,(\partial_{x_i}\cdot s_2)\bullet f,\ldots,(\partial_{x_i}\cdot s_m)\bullet f)^\top$. One may equivalently write \eqref{eq:connmatrices} as $\d F=M F$, with $M$ an $m\times m$ matrix of rational differential one-forms. 
Changing basis via $\widetilde{F}=GF$ for some invertible $m\times m$ matrix~$G$ acts via the {\em gauge transformation} on the connection matrices: the transformed system then reads as $\partial_{x_i} \bullet \widetilde{F}=\widetilde{M_i}\cdot\widetilde{F}$ for
\begin{align*}
    \widetilde{M_i} \, =\, GM_iG^{-1}+\frac{\partial G}{\partial x_i}G^{-1} \, ,
\end{align*}
where entry-wise differentiation of the matrix $G$ is meant. We will use gauge transforms in \Cref{sec:twosite} in order to change to a basis consisting of integrals against canonical forms of the bounded regions of the hyperplane arrangement.

\subsection{Restrictions and GKZ systems}\label{sec:GKZ_restrictions}
Later on, we will need restrictions of $D_n$-ideals to subspaces of~$\CC^n$. For now, let $D_n$ denote the Weyl algebra in variables $x_1,\ldots,x_n$. For $m<n$, we will denote by $D_m$ the Weyl algebra $\CC[x_1,\ldots,x_m]\langle \partial_{x_1},\ldots,\partial_{x_m} \rangle$ in the first $m$ variables.
\begin{definition} 
Let $I$ be a $D_n$-ideal. The $D_m$-ideal 
\begin{align}\label{eqrestrictionideal}
	(I+x_{m+1}D_n+\cdots+x_nD_n ) \,\cap \,D_m 
\end{align}
is the {\em restriction ideal} of $I$ to the coordinate subspace $\{x_{m+1}=\cdots=x_n=0\}\subset \CC^n$.
\end{definition}

Let $f(x_1,\ldots,x_n)$ be a holonomic function and $m < n$. Then  the restriction of $f$ to the coordinate subspace $\{x_{m+1}=\cdots=x_n=0\}$, i.e., $f(x_1,\ldots,x_m,0,\ldots,0)$,
is a holonomic function in the variables $x_1,\ldots,x_m$, which is annihilated by the restriction ideal~\eqref{eqrestrictionideal}, cf.\ \cite[Proposition 5.2.4]{SST00}.

In this article, we are interested in restrictions of
GKZ systems, which are prominent examples of $D$-ideals.
They are also called {``$A$-hypergeometric systems,''} and we refer to~\cite{GKZsurvey} for the concise connection between these systems and hypergeometric functions.
They are holonomic systems, implying that their space of holomorphic solutions is finite-dimensional, and their structure is deeply linked to toric geometry and combinatorics. Their singular locus coincides with the zero locus of the principal $A$-determinant, see~\cite[Remark~1.8]{GelfandKapranovZelevinsky}. GKZ systems are encoded by an integer matrix~$A$ and a parameter vector~$\kappa$. Since this is what we need in our study, we here directly address the 
case that $A$ comes from a family of Laurent polynomials. For that, let $\{A_j\}_{j=1,\dots,k}$ be finite subsets of $\ZZ^n$ representing the monomial supports of $k$ Laurent polynomials
\begin{align}\label{eq:Laurentpol}
f_j \, = \, \sum_{u \,\in\, A_j} c_{u,j} \alpha^u \quad \text{with }\quad \alpha^u \,=\, \alpha_1^{u_1}\cdots \alpha_n^{u_n} ,
\end{align}
and parameters $(c_{u,j})_{u\in A_j}$  with values in $\CC^{A_j}= \CC^{|A_j|}$. We build the matrix   
\begin{align*}
A \ = \ \left( \begin{array}{ccc|ccc|c|ccc}
&A_1& & &A_2& &  & &A_k\\
1 & \cdots & 1 & 0 & \cdots & 0 &  & 0 & \cdots & 0 \\
0 & \cdots & 0 & 1 & \cdots & 1 &  &  & \vdots &  \\
& \vdots & & & \vdots & & \cdots& 0 & \cdots & 0\\
0 & \cdots & 0 & 0 & \cdots & 0 & & 1 & \cdots & 1
\end{array} \right) \, ,
\end{align*}
namely the {\em Cayley configuration} of the Laurent polynomials $f_1,\ldots,f_k$,
and consider the Weyl algebra $D_A = \CC[c_u \, |\, u\in A ]\langle\partial_u \, | \, u\in A\rangle$ with variables indexed by the columns of~$A$. The {\em toric ideal} of $A$ is defined to be the binomial ideal 
\begin{align}
I_A \, \coloneqq \, \left\langle \partial^a -\partial^b \,\, | \,\, a-b \in \ker(A), \,\, a,b \in \NN^A \right\rangle  ,
\end{align}
where $a = (a_u)_{u\in A}$ and $\partial^a = \prod_{u\in A}\partial_{u}^{a_u}$, and analogously for $b$. In addition, given a vector of possibly complex parameters $\kappa\in \CC^{n+k}$, we define a second left ideal 
$J_{A,\kappa}$ to be generated by the entries of $A\theta-\kappa$, where $\theta = (\theta_u)_{u\in A}$ and $\theta_u = c_u\partial_u$. Finally, the GKZ system associated to the pair $(A,\kappa)$ is the left \mbox{$D_A$-ideal}
\begin{align*}
H_A(\kappa) \,=\, I_A + J_{A,\kappa} \, .
\end{align*}
A full description of the solutions to such a $D_A$-ideal is given by generalized Euler integrals. We refer to~\cite{GKZ90,AFST22} for a general definition of such integrals and a proof that they provide solutions to GKZ systems, {   and to \cite{BFP14,CGMMMMT22} for convergence criteria for the integral in terms of the Newton polytopes of the Laurent polynomials $f_j$ in~\eqref{eq:Laurentpol}.}

Here, we observe that cosmological integrals as in~\eqref{eq:cosmoint} are examples of generalized Euler integrals, where the coefficients of the polynomials appearing in the integral are restricted to a linear subspace of the full coefficient space $\CC^A = \CC^{A_1}\times \cdots \times \CC^{A_k}$. This fact motivates the need of considering the restriction ideal of $H_A(\kappa)$ when aiming to compute differential equations for~\eqref{eq:cosmoint}. 
Computing GKZ restrictions is a challenging problem in computer algebra. We also point out that they are, in general, not GKZ systems themselves again. We refer to \cite{FFW} for detailed formulae. Recent advances in restriction algorithms for $D$-ideals have been achieved in~\cite{Chestnov}, specifically at the level of their Pfaffian systems~\cite{CGMMMMT22}. In~\cite{FMT24}, based on theoretical and computational considerations, it was conjectured that the singular locus of a restricted GKZ system, arising from a Feynman integral, coincides with the Euler discriminant, cf.~\eqref{eq:Euler_discr}.
In \Cref{sec:cosmology}, we will show that the differential equations for the single-exchange diagram described in \cite{DEcosmological} can be interpreted in terms of a restricted GKZ system. {   The connection between cosmological integrals and restrictions of GKZ systems was recently pioneered in~\cite{GrimmHoefnagels}. Where our works overlap, we find agreement in the results. 
In the study of the two-site chain, we constructed the differential operator $\Delta_3$~\eqref{eq:Delta3} to obtain the correct holonomic rank of our annihilating $D$-ideal; this operator was also found in~\cite[(5.75)]{GrimmHoefnagels}.}

\subsection{Flat space wavefunctions from graphs}
Before delving deeper into the mathematical tools required to study cosmological integrals~\eqref{eq:cosmoint}, we establish the required notation. Let $G = (V,E)$ (or $G = (V(G),E(G))$ to highlight the underlying graph) be an undirected connected graph, where $V$ is a set of $n$ vertices, and $E$ is a collection of edges, namely pairs $ij$ for some $v_i,v_j\in V$. 
We denote by $(X,Y) \coloneqq (X_1,\dots,X_n,\{Y_{ij}\}_{ij\in E})
$ the vector of kinematic parameters associated to each vertex and edge of the graph. This will be the general notation for graphs like the ones drawn in black color within diagrams in spacetime, some of which are presented in \Cref{fig:23sitestar}.

To each graph~$G$, we associate the rational function
\begin{equation}\label{eq:FSwave_explicit}
   \psi_{\text{flat}} \, = \,  \psi_G^{\text{flat}} \, = \, 2^{n-1}\cdot \big(\prod_{ij \in E}Y_{ij}\big) \cdot \frac{P}{\ell_1 \ell_2\cdots \ell_k} \, \in\, \CC(X,Y) \, ,
\end{equation}
where $P,\ell_1,\dots,\ell_k \in \CC[X,Y]$, and  {the normalization factor $2^{n-1}\prod_{ij\in E} Y_{ij}$ arises from the requirement that the residue of a canonical form is $\pm 1$.} More precisely, the polynomial $P$ is conjectured to represent the adjoint hypersurface of the cosmological polytope associated to~$G$~\cite{cosmowave}, and $\ell_i=\ell(H_i)$ for $i=1,\dots,k$ are linear forms that can be computed from the connected subgraphs of $G$. Here, a subgraph is another graph formed from a subset of the vertices and edges of $G$ where all endpoints of the edges of $H$ are in the vertex set of $H$. Given a connected subgraph $H = (V(H),E(H))$ of~$G$, the linear form associated to~$H$ is 
\begin{equation}\label{eq:linear_forms_subgraphs}
\ell\left(H\right) \ =  \sum_{v_i \,\in \, V(H)} \! X_i \ \ \  + \! \sum_{\substack{e \,=\, \{i,j\},\\ v_i\,\in\, V(H), \, v_j \,\nin\, V(H)}} \hspace*{-4mm} \!  Y_{ij} \ \ \ +\! 
    \sum_{\substack{e \,=\, \{i,j\} \,\nin\, E(H),\\ v_i,v_j\,\in\, V(H)}} \hspace*{-3mm} 2Y_{ij} \, ,
\end{equation}
see~\cite{cosmowave,cosmopol} for references. Then, unveiling the use of multi-index notation, the cosmological integral is given as
\begin{align}\label{eq:cosmoint_explicit}
    \psi_{\eps}(X,Y) \, = \, \int_{\RR^n_{>0}} \psi_{\text{flat}}\left(X_1+\var_1,\ldots,X_n+\var_n,\{Y_{ij}\}_{ij\in E}\right)\cdot \var_1^\eps\cdots \var_n^\eps \, \d\var \, ,
\end{align}  
where $\d \var \coloneqq \d \var_1\wedge \dots \wedge \d\var_n.$ In what follows, we denote $L_i \coloneqq \ell_i(X+\alpha,Y)$. 
We will denote the complement of the hyperplane arrangement $(L_1,\ldots,L_k)$ in the algebraic $n$-dimensional torus with coordinates $\alpha=(\alpha_1,\dots,\alpha_n)$ by
\[ \mathcal{H}^G_{(X,Y)}\, = \, \left\{ \alpha\in(\CC^*)^n \,\, : \,\, L_i\left(X,Y;\alpha\right) \neq 0, \ i=1,\dots,k \right\},\]
with $(X,Y)\in \CC^{|V|+|E|}$ generic. 
\Cref{fig:hyperplanestwosite}
shows the hyperplane arrangement arising from the integral associated to the $2$-site chain. 

\subsection{Shift-relations}\label{sec:shift_relations}
The Weyl algebra we will be working with here is
\begin{align*}
    D \,=\, \CC(X,Y)[ \var_1,\ldots, \var_n]\langle \partial_{ \var_1},\ldots,\partial_{ \var_n }\rangle  \, .
\end{align*}
Its elements are linear differential operators, defined globally on affine $n$-space in the $\var$-variables over the field of rational functions in the $X_i$'s and $Y_{ij}$'s.

\begin{lemma}\label{lem:annrat}
    Let $p\in \CC[\var_1,\ldots,\var_n]$ be a polynomial. Let $I$ be the $D_n$-ideal generated by $p\partial_{\alpha_i}+\frac{\partial p}{\partial \var_i}$, $i=1,\ldots,n$. Then the full annihilating $D$-ideal of $1/p$ equals the Weyl closure of~$I$. As a formula,
   $\Ann_{D_n}\left( 1/p\right) \,=\, W(I) . $
\end{lemma}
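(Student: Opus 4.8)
```latex
The plan is to prove the statement by two inclusions. The inclusion $I \subseteq \Ann_{D_n}(1/p)$ is immediate: one checks directly that each generator $p\partial_{\alpha_i}+\frac{\partial p}{\partial\alpha_i}$ annihilates $1/p$, since
\begin{align*}
\left(p\partial_{\alpha_i}+\tfrac{\partial p}{\partial\alpha_i}\right)\bullet \tfrac{1}{p} \,=\, p\cdot\left(-\tfrac{1}{p^2}\tfrac{\partial p}{\partial\alpha_i}\right)+\tfrac{\partial p}{\partial\alpha_i}\cdot\tfrac{1}{p}\,=\,0 \, .
\end{align*}
Because $\Ann_{D_n}(1/p)$ is a left $D_n$-ideal, it contains all of $I$. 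Since $1/p$ is an algebraic (hence holonomic) function, its annihilator is holonomic, so in particular it is a Weyl-closed $D_n$-ideal. As $W(I)$ is the smallest Weyl-closed $D_n$-ideal containing $I$ (it is the contraction $R_nI\cap D_n$, and Weyl closure is a closure operation that preserves holonomic rank), we immediately obtain $W(I)\subseteq \Ann_{D_n}(1/p)$. This settles the easy direction.

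For the reverse inclusion $\Ann_{D_n}(1/p)\subseteq W(I)$, the key idea is to compare holonomic ranks and invoke that both ideals are Weyl-closed. By the remark in the excerpt, $W(I)$ has the same holonomic rank as $I$. The function $1/p$ has a one-dimensional solution space of its annihilator in a suitable sense---more precisely, over the field $\CC(\alpha)$ the rational Weyl module $R_n/R_n\Ann_{D_n}(1/p)$ has rank one, since $1/p$ spans a rank-one $R_n$-module and the annihilator is its full annihilator. So the first step is to show that the holonomic rank of $I$ is also one, i.e.\ that $R_n/R_nI$ is one-dimensional over $\CC(\alpha)$. I would do this by showing that, working over $R_n$, the generators $\partial_{\alpha_i}+\frac{1}{p}\frac{\partial p}{\partial\alpha_i}$ (obtained by clearing the unit $p$) allow every $\partial^a$ to be reduced to an element of $\CC(\alpha)$: each $\partial_{\alpha_i}$ acts as multiplication by $-\frac{1}{p}\frac{\partial p}{\partial\alpha_i}\in\CC(\alpha)$ modulo $R_nI$, so the residue classes of all monomials collapse onto $\CC(\alpha)\cdot 1$, giving $\rank(I)\le 1$. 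It is nonzero because $1/p$ is a nonzero solution, so $\rank(I)=1$.

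Once both $W(I)$ and $\Ann_{D_n}(1/p)$ are Weyl-closed $D_n$-ideals of holonomic rank exactly one, with $W(I)\subseteq \Ann_{D_n}(1/p)$, the equality follows from the fact that Weyl-closed ideals are determined by their rational extension: a containment of Weyl-closed ideals of equal finite holonomic rank is an equality, because $R_nW(I)=R_n\Ann_{D_n}(1/p)$ (both rational extensions have the same finite codimension one quotient, and one is contained in the other), and then intersecting back with $D_n$ gives $W(I)=R_nW(I)\cap D_n=R_n\Ann_{D_n}(1/p)\cap D_n=\Ann_{D_n}(1/p)$, using that both are Weyl-closed.

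The main obstacle I anticipate is the rank computation in the second paragraph, specifically making rigorous that reduction modulo $R_nI$ genuinely sends every differential monomial into $\CC(\alpha)\cdot 1$ and that no smaller-rank collapse occurs. The subtlety is purely at the level of the rational Weyl algebra: I must ensure the reduction is well-defined and terminates, and that the resulting quotient is exactly one-dimensional rather than zero. This amounts to verifying that the single relation $\partial_{\alpha_i}\equiv -\frac{1}{p}\frac{\partial p}{\partial\alpha_i}\pmod{R_nI}$ is consistent across all $i$ (which holds since mixed partials commute and the $-\frac{1}{p}\partial_{\alpha_i}p$ are gradients of $-\log p$), so no contradiction forces the quotient to vanish. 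Everything else---the easy inclusion and the rank-comparison argument---is routine given the properties of Weyl closure recalled earlier in the excerpt.
```
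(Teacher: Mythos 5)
Your proof is correct in substance but takes a genuinely different route from the paper. The paper deduces the lemma from the explicit, quantitative statement of \Cref{lem:Walther}: for any $P\in\Ann_{D_n}(1/p)$ of order $d$ one has $p^dP\in I$, proved by induction on the order via the identity $pP=Pp-[P,p]$; the inclusion $\Ann_{D_n}(1/p)\subseteq W(I)$ then follows because $p^d$ is a unit in $R_n$. You instead run a soft rank-counting argument entirely inside the rational Weyl algebra: $R_nI$ contains $\partial_{\var_i}+\tfrac{1}{p}\tfrac{\partial p}{\partial\var_i}$, so $R_n/R_nI$ is spanned by the class of $1$ and has dimension exactly one (nonzero because $1/p$ is a nonzero solution, so $1\notin R_nI$); any proper left ideal of $R_n$ containing $R_nI$ must therefore equal $R_nI$, whence $R_nI=R_n\Ann_{D_n}(1/p)$, and contracting to $D_n$ using Weyl-closedness of both sides gives the claim. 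Both arguments are valid; the paper's yields the sharper information $p^dP\in I$, while yours is shorter and makes the mechanism (rank one over $R_n$) transparent.

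One justification you give is wrong, although the statement it supports is true. Holonomicity of $\Ann_{D_n}(1/p)$ does \emph{not} imply that it is Weyl-closed: holonomic $D$-ideals need not be Weyl-closed in general (for instance, $D_1\cdot x\partial_x$ is holonomic, yet its Weyl closure contains $\partial_x$, which does not lie in $D_1\cdot x\partial_x$). The correct reason that $\Ann_{D_n}(f)$ is Weyl-closed for any function $f$ is direct: given $Q\in R_n\Ann_{D_n}(f)\cap D_n$, write $Q=\sum_j r_jP_j$ with $P_j\bullet f=0$ and clear denominators to get $qQ=\sum_j \tilde r_jP_j$ with $q$ a nonzero polynomial and $\tilde r_j$ polynomial; then $qQ\bullet f=0$, so $Q\bullet f$ vanishes on a dense open set and hence identically, i.e.\ $Q\in\Ann_{D_n}(f)$. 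With this repair your argument is complete.
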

This statement is an immediate consequence of the following lemma, which we learned from discussions with Uli Walther.
\begin{lemma}\label{lem:Walther}
 Let $p,I$ be as in Lemma~\ref{lem:annrat}. Let $P\in \Ann_D(1/p)$, and denote by $d=\operatorname{ord}(P)$ its order. Then $p^d P\in I$.
\end{lemma}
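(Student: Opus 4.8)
The plan is to prove the sharp containment by a reduction of $p^{d}P$ modulo $I$ that is organized so as never to spend more than the allotted $d$ powers of $p$. Throughout write $\partial_i=\partial_{\var_i}$ and $g_i=p\,\partial_{i}+\tfrac{\partial p}{\partial\var_i}$ for the generators of $I$, so that indeed $g_i\bullet(1/p)=0$. The device that makes the power count tight is the left $\CC(X,Y)[\var]$-submodule
\[
  \mathcal{G}\,=\,\sum_{\gamma\in\NN^n}\CC(X,Y)[\var]\cdot p^{|\gamma|}\partial^{\gamma}\,\subseteq\,D,
\]
that is, the operators $\sum_\gamma c_\gamma\partial^{\gamma}$ whose coefficient of $\partial^{\gamma}$ is divisible by $p^{|\gamma|}$. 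The first observation is that $p^{d}P\in\mathcal{G}$ whenever $\operatorname{ord}(P)\le d$: writing $P=\sum_{|\gamma|\le d}c_\gamma\partial^{\gamma}$, each coefficient of $p^{d}P$ is $p^{d}c_\gamma=p^{|\gamma|}\cdot(p^{d-|\gamma|}c_\gamma)$ with $d-|\gamma|\ge0$, hence divisible by $p^{|\gamma|}$.

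The crux of the argument — and the step I expect to be the main obstacle — is to show that $\mathcal{G}$ is closed under left multiplication by each $g_i$; this is exactly what keeps the bookkeeping tight. A naive reduction that replaces $\partial_i$ by $-\tfrac{1}{p}\tfrac{\partial p}{\partial\var_i}$ modulo $I$ leaves the left-ideal structure (only right factors of $g_i$ are permitted), forcing one to clear denominators over and over and thereby over-spending powers of $p$, so the estimate $p^{d}$ would be lost. Closure of $\mathcal{G}$ is instead verified by a direct Leibniz computation: for $b\in\CC(X,Y)[\var]$,
\[
  g_i\cdot\bigl(p^{|\gamma|}b\,\partial^{\gamma}\bigr)\,=\,p^{|\gamma|+1}b\,\partial^{\gamma+e_i}\,+\,p^{|\gamma|}\Bigl((|\gamma|+1)\tfrac{\partial p}{\partial\var_i}\,b+p\,\tfrac{\partial b}{\partial\var_i}\Bigr)\partial^{\gamma},
\]
and both summands visibly lie in $\mathcal{G}$, since the coefficient of $\partial^{\gamma+e_i}$ carries $p^{|\gamma|+1}=p^{|\gamma+e_i|}$ and that of $\partial^{\gamma}$ carries $p^{|\gamma|}$. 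As $1\in\mathcal{G}$, it follows that every product of generators lies in $\mathcal{G}$; in particular, for each multi-index $\gamma$ the element $G_\gamma:=g_1^{\gamma_1}\cdots g_n^{\gamma_n}$ lies in $I\cap\mathcal{G}$ and has highest-order term $p^{|\gamma|}\partial^{\gamma}$, matching that of $p^{|\gamma|}\partial^{\gamma}$.

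The reduction then proceeds by induction on the order. Given $Q\in\mathcal{G}$ of order $d\ge1$, write $Q=\sum_{|\gamma|\le d}p^{|\gamma|}b_\gamma\partial^{\gamma}$ with $b_\gamma\in\CC(X,Y)[\var]$, and subtract $\sum_{|\gamma|=d}b_\gamma G_\gamma\in I$; because each $G_\gamma$ has top term $p^{d}\partial^{\gamma}$, the order-$d$ part cancels, and the difference again lies in $\mathcal{G}$ and has order $\le d-1$. By induction it is congruent modulo $I$ to an order-zero operator, so $Q\equiv R\pmod I$ for some $R\in\CC(X,Y)[\var]$, with no extra power of $p$ introduced at any stage. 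Applying this to $Q=p^{d}P\in\mathcal{G}$ yields $p^{d}P\equiv R\pmod I$. Finally I would close the argument using the annihilation hypothesis: both $p^{d}P$ and the subtracted element of $I$ annihilate $1/p$ — the former since $(p^{d}P)\bullet(1/p)=p^{d}\,(P\bullet(1/p))=0$, the latter since $I\subseteq\Ann_D(1/p)$ — whence $R\bullet(1/p)=R/p=0$ forces $R=0$, and therefore $p^{d}P\in I$, as claimed. Note that this reasoning is self-contained and does not invoke Lemma~\ref{lem:annrat}, so no circularity arises.
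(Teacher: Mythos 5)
Your argument is correct, but it takes a genuinely different route from the paper's. The paper also inducts on the order, but its mechanism is the commutator identity $pP=Pp-[P,p]$: writing $P=q+\sum_i Q_i\partial_{\var_i}$ with $q$ a polynomial, one gets $Pp\equiv qp\pmod{I}$ because $\partial_{\var_i}p=p\partial_{\var_i}+\tfrac{\partial p}{\partial\var_i}$ is a generator of $I$; hence $pP\equiv qp-[P,p]\pmod{I}$, and the right-hand side is an operator of order at most $d-1$ that still annihilates $1/p$, so the induction hypothesis applies to it and left-multiplication by $p^{d-1}$ finishes the step. The paper therefore re-uses the hypothesis $P\in\Ann_D(1/p)$ at every stage of the induction, whereas you prove a purely algebraic normal-form statement---every element of the filtered submodule $\mathcal{G}$ is congruent modulo $I$ to an order-zero operator, via leading-term reduction against the explicit elements $G_\gamma=g_1^{\gamma_1}\cdots g_n^{\gamma_n}\in I$ whose principal parts are $p^{|\gamma|}\partial^{\gamma}$---and invoke annihilation only once, at the very end, to kill the order-zero remainder. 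Your Leibniz computation establishing that $\mathcal{G}$ is closed under left multiplication by the $g_i$ checks out, as does the cancellation of the top-order part in the reduction step. What each approach buys: the paper's commutator trick is shorter and needs no auxiliary module, while your version is more constructive (it amounts to a division procedure with a controlled $p$-adic bookkeeping of coefficients) and cleanly isolates the single place where the hypothesis on $P$ enters.
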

\begin{proof}
We proceed by an inductive argument on the order of $P$.
If $\operatorname{ord}(P)=0$, then $P\in \CC[\var_1,\ldots,\var_n]$. A polynomial annihilates $1/p$ iff it is zero, and so $p^0P=P\in I$.
Now let $P\in \Ann_D(1/p)$ be of order~$d$. Note that $pP=Pp-[P,p]$. Write $P=q+\sum Q_i \partial_{\var_i}$, where $q$ is a polynomial, and the $Q_i$ are some elements of~$D$. Then $Pp=qp+\sum Q_i \partial_{\var_i} p$.
The sum term is in~$I$. So, modulo~$I$, $qp\equiv Pp$. Thus, $pP=Pp-[P,p]\equiv qp-[P,p]$. 
Since $P$ annihilates $1/p$, so does $Pp-[P,p]=pP$ and hence the same is true for $qp-[P,p]$. This operator is of order at most $d-1$, hence
$p^{d-1}(qp-[P,p])\in I$. So, modulo $I$, $p^dP\in I$.
\end{proof}

To construct shift-relations among cosmological integrals, we will make use of the Mellin transform.
Denote by $\mathcal{S}_n=\CC[\varepsilon]\langle \sigma_\varepsilon,\sigma_\varepsilon^{-1} \rangle $ the shift algebra in the variables $\varepsilon=(\varepsilon_1,\ldots,\varepsilon_n)$. The shifts are w.r.t.\ the variables~$\varepsilon$, i.e., $\sigma_{\varepsilon_i}^{\pm 1}\colon \varepsilon_i \mapsto \varepsilon_i\pm 1$. The generators obey \begin{align*}
\sigma_{\varepsilon_i}^{\pm 1}\varepsilon_i  \,=\,({\varepsilon_i} \pm 1)\sigma_{\varepsilon_i}^{\pm 1} \, .
\end{align*}
This implies $\sigma_{\varepsilon}^a\varepsilon^b=(\varepsilon +a)^b\sigma_{\varepsilon}^a$ for $a\in \ZZ^n$, $b\in \NN^n$. To elements of $D_n$, one associates linear partial differential equations; to elements of $\mathcal{S}_n$, recurrence relations.
We will need to extend the shift algebra~to
\begin{align*}
    \mathcal{S}_n
    \, = \, \CC(X_1,\ldots,X_n,\{ Y_{ij}\}_{ij \in E})[\varepsilon]\langle \sigma_{\varepsilon}^{\pm 1}\rangle  
    \, .
\end{align*}

The algebraic Mellin transform of Loeser--Sabbah \cite{LoeSab} is the isomorphism of the non-commutative $\CC(X_1,\ldots,X_n,\{ Y_{ij}\}_{ij \in E})$-algebras
\begin{align}\begin{split}\label{eq:algmellin}
   \mathfrak{M}\{ \cdot \} \colon \ \CC(X,Y)[\alpha^{\pm 1}]\langle \partial_{ \var_1},\ldots,
   \partial_{ \var_n}\rangle \, \stackrel{\cong}{\longrightarrow} \, \mathcal{S}_n
   \, ,  \  \\
    \var_i^{\pm 1} \mapsto \sigma_{\varepsilon_i}^{\pm 1}, \  \var_i\partial_{ \var_i} \mapsto - \varepsilon_i, \ X_i \mapsto X_i, \ Y_{ij}\mapsto Y_{ij} \, .\, .
   \end{split}
\end{align}
Note that $\mathfrak{M}\{\partial_{ \var_i}\} = -(\varepsilon_i-1)\sigma_{\varepsilon_i}^{-1} $ and $\mathfrak{M}\{ \var_i\theta_{ \var_i}\}=-(\varepsilon_i+1)\sigma_{\varepsilon_i}$, where $\theta_{ \var_i}= \var_i\partial_{ \var_i}$ denotes the $i$-th Euler operator. 
For integration cycles that are suitable for integration by parts with vanishing boundary terms, 
one has $\mathfrak{M}\{P\bullet f\}=\mathfrak{M}\{P\}\bullet \mathfrak{M}\{f\}$. The isomorphism~\eqref{eq:algmellin} implies that
\begin{align*}
\mathfrak{M}\{ \operatorname{Ann}_{D[\var^{\pm 1}]}\left(f\right)\} \,=\, \operatorname{Ann}_{\mathcal{S}_n}\left( \mathfrak{M}\{f\} \right) \, .
\end{align*}
Hence, the Mellin transform of the full annihilator of $\psi_{\text{flat}}$ recovers \underline{all} shift relations in the $\eps$-variables for our cosmological integrals $\mathfrak{M}\{\psi_{\text{flat}} \}$.

\section{Use in cosmology}\label{sec:cosmology}
We here showcase the methods from \Cref{sec:tools} at integrals arising from graphs in cosmology---the path graph on $n$ vertices being called ``$n$-site chain'' there.

\subsection{Two-site chain}\label{sec:twosite}
The linear forms appearing in the denominator of $\psi_2^{\text{flat}}$, after shifting the $X$ variables, are the following elements of $\CC(X_1,X_2,Y)[ \var_1, \var_2]$:
%\vspace*{-.5mm}
\begin{align*}%\label{eq:formstwositerepeat}
        L_1 \,=\,  \var_1 + \var_2 +X_1+X_2 \, , \quad
         L_2 \,=\,  \var_1 +X_1+Y , \quad
         L_3 \,=\,  \var_2 + X_2+ Y  \, .
\end{align*}
The hyperplane arrangement $(L_1,L_2,L_3)$ is depicted in \Cref{fig:hyperplanestwosite}.
\begin{figure}%[h]
\begin{center}
\begin{tikzpicture}[scale=.94, transform shape]
\tikzstyle{grid lines}=[lightgray,line width=0]
\tikzstyle{every node}=[circle, draw, fill=black, inner sep=1pt, minimum width=9pt] node{};
\draw[black,->,thick] (-4,0)  -- (1.7,0) ;
\draw[black,->,thick]  (0,-4)  -- (0,1.7) ;		
\draw[black,very thick] (-2,-4)  -- (-2,1.7) ;
\draw[black,very thick] (-4,-2)  -- (1.7,-2) ;
\draw[black,very thick] (-4,1)  -- (1,-4) ;
\node[draw=none,fill=none] at  (.2,.25) {\small $0$};
\node[draw=none,fill=none] at  (2.1,0)  {\small $ \var_1$};
\node[draw=none,fill=none] at  (0,2.0) {\small $ \var_2$};
\node[draw=none,fill=none] at  (-2.13,-4.3) { $\{ L_2=0\}$};
\node[draw=none,fill=none] at  (2.6,-2) {$\{ L_3=0\}$};
\node[draw=none,fill=none] at  (1.8,-4.3) { $\{  L_1=0\}$};
\node[draw=none,fill=none] at  (-1.21,.25) { \small $-(X_1+Y)$};
\node[draw=none,fill=none] at  (-3.7,-.3) {\small $-(X_1+X_2)$};
\node[draw=none,fill=none] at  (.8,-1.75) {\small $-(X_2+Y)$};
\draw[draw=none, fill=blue, fill opacity=0.14] (-2,-1) -- (-2,-2) -- (-1,-2) -- (-2,-1);
\node[draw=none,fill=none] at  (-1.7,-1.7) {\footnotesize $\mathbf{1}$};
\draw[draw=none, fill=gray, fill opacity=0.2] (-3,0) -- (-2,-1) -- (-2,0) -- (-3,0);
\node[draw=none,fill=none] at  (-2.27,-0.32) {\footnotesize $\mathbf{3}$}; 
\draw[draw=none, fill=red, fill opacity=0.16] (-1,-2) -- (0,-3) -- (0,-2) -- (-1,-2);
\node[draw=none,fill=none] at  (-.3,-2.3) {\footnotesize $\mathbf{2}$}; 
\draw[draw=none, fill=teal, fill opacity=0.14] (0,0) -- (-2,0) -- (-2,-1) -- (-1,-2) -- (0,-2) -- (0,0);
\node[draw=none,fill=none] at  (-.92,-.92) {\footnotesize $\mathbf{4}$}; 
\draw[teal!30,very thick,dashed] (-3,0) -- (0,-3) -- (0,0) -- (-3,0);
\end{tikzpicture}
\caption{A real picture of the hyperplane arrangement $(L_1,L_2,L_3)$ for the two-site graph, here depicted for generic $X_1,X_2,Y$ with $X_1,X_2>|Y|$. Together with the coordinate axes, it encloses four bounded regions. To the triangle with label~$i$, we have an associated differential operator $Q_i$ as resulting from~\eqref{eq:gauge}.}
\label{fig:hyperplanestwosite}
\end{center}
\end{figure}
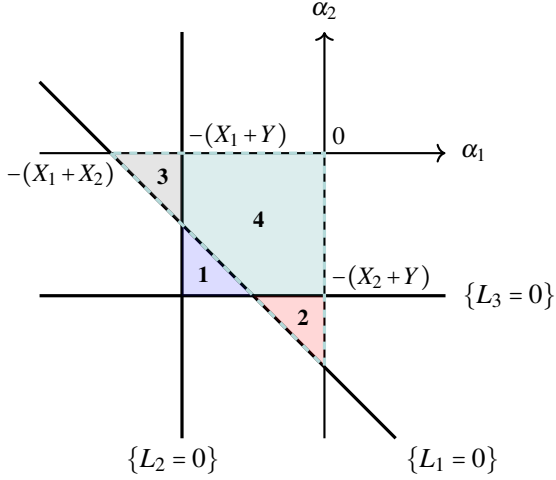

The underlying rational function of interest hence is $\frac{2Y}{L_1L_2L_3}$, and the cosmological integral is the Mellin integral
\begin{align*}
   \int_{\RR_{>0}^2}\frac{2Y }{L_1L_2L_3} \var_1^\varepsilon \var_2^\varepsilon  \, \d  \var_1 \d  \var_2 
   \,=\, 2Y\cdot \mathfrak{M}\left\{ L_1^{-1}L_2^{-1}L_3^{-1}
   \right\} (\varepsilon+1,\varepsilon+1) \, .
\end{align*}

\paragraph{Shift relations}
Denote $L=L_1L_2L_3$. The following four differential operators generate the annihilator of $1/(L_1L_2L_3)$:
\begin{align*}
     P_1  &\,=\,  \var_1 \var_2\partial_{ \var_2}+ \var_2^2\partial_{ \var_2}+(X_2+Y) \var_1\partial_{ \var_2}+(X_1+2X_2+Y) \var_2\partial_{ \var_2} + \var_1+2 \var_2 \\ & \qquad +(X_1+X_2)(X_2+Y)\partial_{ \var_2}+(X_1+2X_2+Y) \, ,\\
 P_2  & \,=\, \var_1 \var_2\partial_{ \var_1}+ \var_2^2\partial_{ \var_2}+(X_2+Y) \var_1\partial_{ \var_1}+(X_1+Y) \var_2\partial_{ \var_1}+2X_2 \var_2\partial_{ \var_2}  +3 \var_2 \\ &\qquad +(X_1+Y)(X_2+Y)\partial_{ \var_1}+(X_2^2-Y^2)\partial_{ \var_2}+(3X_2+Y) \, ,\\
 P_3 &\,=\,  \var_1^2\partial_{ \var_1}- \var_2^2\partial_{ \var_2}+2X_1 \var_1\partial_{ \var_1}-2X_2 \var_2\partial_{ \var_2} +2 \var_1-2 \var_2\\ &\qquad +(X_1^2-Y^2)\partial_{ \var_1}-(X_2^2-Y^2)\partial_{ \var_2}+2(X_1-X_2) \, ,\\
 P_4  &\,=\,  \var_2^2\partial_{ \var_1}\partial_{ \var_2}- \var_2^2\partial_{ \var_2}^2+2X_2 \var_2\partial_{ \var_1}\partial_{ \var_2}-2X_2 \var_2\partial_{ \var_2}^2+2 \var_2\partial_{ \var_1}-4 \var_2\partial_{ \var_2} \\ &\qquad +(X_2^2-Y^2)\partial_{ \var_1}\partial_{ \var_2}-(X_2^2-Y^2)\partial_{ \var_2}^2+2X_2\partial_{ \var_1}-4X_2\partial_{ \var_2}-2 \, .
\end{align*}
This $D$-ideal can be computed with the {\sc Singular:Plural}~\cite{Singular,Plural} library {\tt dmod\_lib}~\cite{Dmodlib}, or, equivalently, by computing the Weyl closure of the \mbox{$D$-ideal} generated by $L\partial_{ \var_i}-\partial_{ \var_i}\bullet L$, where $i=1,2$. 

\begin{remark}
The Bernstein--Sato polynomial of $L=L_1L_2L_3$ is $b_L=(s+1)^2$. Since $-1\notin V(b_L)+\ZZ_{>0}$,
one has the following relation to the $s$-parametric annihilator of $L$: $\Ann_{D_2[s_1,s_2,s_3]}(L_1^{s_1}L_2^{s_2}L_3^{s_3})|_{(s_1,s_2,s_3)=(-1,-1,-1)}=\Ann_{D_2}(1/L)$, cf.\ \cite[Theorem~2]{OakuLog}. A multivariate analogue is given in~\cite[Proposition~3.6]{OT99}. 
If one instead allows individual powers of the hyperplanes, one needs to pass on to Bernstein--Sato ideals. For some well-behaved cases, generators of parametric annihilators can be expressed in terms of logarithmic derivations, cf.~\cite{BathSaito22}.
\end{remark}

The left ideal in the shift algebra $\mathcal{S}_2$ that is generated by $\mathfrak{M}\{ P_1\},\ldots,\mathfrak{M}\{ P_4\}$ encodes all shift relations for $\mathfrak{M}\{1/L\}(\varepsilon_1,\varepsilon_2)$. We here display a selection of the obtained recurrence relations:
\begin{align*}
   \mathfrak{M}\{P_1\} & \,=\, -(\varepsilon_2 - 1)\big[(X_2 + Y)\sigma_1\sigma_2^{-1} + \sigma_1 + \sigma_2 \nonumber \\ 
   & \quad  + (X_1 + X_2)(X_2 + Y)\sigma_2^{-1} + (X_1 + 2X_2 + Y)\big] \, ,\\ \nonumber
    \mathfrak{M}\{P_2\}+\mathfrak{M}\{P_3\} & \,=\, -(\varepsilon_1 - 1)\big[  (X_1 + Y)\sigma_1^{-1} \sigma_2 + \sigma_1+ \sigma_2 \\
    &\quad+ (X_1 + X_2)(X_1 + Y)\sigma_1^{-1} + (2X_1 + X_2 + Y)\big] \, . \nonumber
\end{align*}
Note that the shift operators mirror the symmetry of the integral when swapping the variables $X_1$ and $X_2$. The remaining operators can be computed analogously. 

\begin{remark}
To obtain shift relations among master integrals involving only $\eps$ and $\eps-1$ as in \cite[(3.44)]{DEcosmological} for $\eps=\eps_1=\eps_2$, one will have to combine our shift operators with the  Pfaffian system \cite[(3.30)]{DEcosmological} arising from the canonical forms.
\end{remark}

\paragraph{GKZ system}
We now consider the cosmological integral for the two-site chain as a generalized Euler integral and set up its GKZ system. For that, we leave the coefficients of the linear forms generic, i.e., we consider the integral
\begin{align*}
    2Y \cdot  \int_{\Gamma}  
    \frac{1}{(c_1\alpha_1+c_2\alpha_2+c_3)(c_4 \alpha_1 + c_5)(c_6 \alpha_2 + c_7)}\, \alpha_1^{\epsilon+1} \alpha_2^{\epsilon+1}\,\frac{\d \alpha_1}{\alpha_1} \frac{\d \alpha_2}{\alpha_2} \,  .
\end{align*}
{   Note that, for generalized Euler integrals, $\Gamma$ is taken to be a twisted $n$-cycle, cf.~\cite[Section~4]{AFST22}. However, in physics applications, one instead integrates over the positive orthant.}
The toric ideal corresponding to the integral above is 
\begin{align*}
    I_A \,=\, \left\langle \partial_{c_2} \partial_{c_7} - \partial_{c_3} \partial_{c_6} , \, \partial_{c_1} \partial_{c_5} - \partial_{c_3} \partial_{c_4} \right\rangle ,
\end{align*}
and the parameter vector $\kappa=(-(\eps+1),-(\eps+1),-1,-1,-1)^\top$, cf.~\cite[Section~4]{AFST22}. 
The ideal $ J_{A,\kappa}$ is generated by the operators 
\begin{align*}\begin{split}
 \theta_{c_1} + \theta_{c_4} +  (\eps +1 ), \ 
    \theta_{c_2} + \theta_{c_6} + (\eps + 1), \quad \ \  \\
    \theta_{c_1} + \theta_{c_2} + \theta_{c_3} + 1,  \
    \theta_{c_4} + \theta_{c_5} + 1, \  
    \theta_{c_6} + \theta_{c_7} + 1 \, .
\end{split}\end{align*}
Together, they yield the $D$-ideal $H_A(\kappa)=I_A + J_{A,\kappa}$. We now restrict $H_A(\kappa)$ to the subspace $\{ c\in \CC^7 \, | \, c_1 = c_2 =  c_4 = c_6 = 1 \}$, and moreover change variables from the remaining $c$'s to $(X_1,X_2,Y)$ via $
c_3 = X_1+X_2$, $c_5 = X_1 + Y$, and $c_7 = X_2 + Y$.
The resulting $D$-ideal, $H_A^{\text{res}}(\kappa)$, has holonomic rank~$4$. 
This $D$-ideal was reconstructed purely combinatorially from the line arrangement $\{L_1,L_2,L_3\}$ in \cite{PfiSat}.

We compare the restricted GKZ system with a $D$-ideal which originates from operators in the variables $X_1,X_2,Y$ considered in~\cite{DEcosmological}.  Let 
\begin{align*}
\begin{split}
    \Delta_{1} \,=\, (X_1^2-Y^2)\partial_{X_1}^2 + 2(1-\varepsilon)X_1\partial_{X_1} - \varepsilon(1-\varepsilon) \, , \\ \Delta_{2} \,=\, (X_2^2-Y^2)\partial_{X_2}^2 + 2(1-\varepsilon)X_2\partial_{X_2} - \varepsilon(1-\varepsilon) \, .
\end{split}\end{align*}
These operators are right factors of the operators considered in~\cite[Section~3.3]{DEcosmological}.
The $D$-ideal generated by these, however, has infinite holonomic rank. We extend it by using the operator
\begin{align}\label{eq:Delta3}
    \Delta_3 \,=\,  (X_1+Y)(X_2+Y)\partial_{X_1}\partial_{X_2} - \epsilon(X_1+Y)\partial_{X_1} - \epsilon(X_2+Y)\partial_{X_2} + \epsilon^2 \, ,
\end{align}
and the homogeneity operator 
\begin{align*}
    H &\,=\, X_1\partial_{X_1} + X_2\partial_{X_2} + Y\partial_Y - 2\varepsilon \, .
\end{align*}
Denote by $I$ the $D$-ideal 
\begin{align}\label{eq:Itwosite}
    I \,=\, \left\langle \Delta_1 + \Delta_3,\,  \Delta_2 + \Delta_3, \,  H \right\rangle \, .
\end{align}
This $D$-ideal in \eqref{eq:Itwosite} annihilates the correlation function. Its singular locus is
\begin{align*}
\operatorname{Sing}(I) \,=\, V \left( Y(X_1+X_2)(X_1+Y)(X_1-Y)(X_2+Y)(X_2-Y) \right)  ,
\end{align*}
which corresponds exactly to the singular locus of $H_A^{\text{res}}(\kappa)$.
The holonomic rank of $I$ is~$4$. 

As a $\CC(X_1,X_2,Y)$-basis of $R_3/R_3I$, we choose $( 1,\partial_{X_1},\partial_{X_2},\partial_{X_1}\partial_{X_2} )$. 
We have the following relation, which we proved computationally using {\tt Singular} and the package {\tt HolonomicFunctions}~\cite{HolFun} in Mathematica. We provide our code
via GitLab at \href{https://uva-hva.gitlab.host/universeplus/algebraic-approaches-to-cosmological-integrals}{https://uva-hva.gitlab.host/universeplus/algebraic-approaches-to-cosmological-integrals}.
\begin{proposition}\label{prop:GKZres2site}
Let $I$ be the $D$-ideal in~\eqref{eq:Itwosite}.
The $D$-ideal $DIY=\{ PY \,|\, P\in I\}$ is contained in the restricted GKZ system, $DIY \subset H_A^{\text{res}}(\kappa)$.
Moreover, the two $D$-ideals coincide when considered as ideals in the rational Weyl algebra. 
\end{proposition}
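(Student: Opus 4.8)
The plan is to prove the two claims separately, since the first (containment over $D$) and the second (equality over $R$) require different tools. For the containment $DIY \subset H_A^{\text{res}}(\kappa)$, I would take each of the three generators $\Delta_1+\Delta_3$, $\Delta_2+\Delta_3$, and $H$ of $I$, multiply on the right by $Y$, and exhibit each resulting operator $PY$ as an explicit $D_3$-linear combination of generators of $H_A^{\text{res}}(\kappa)$. Since $H_A^{\text{res}}(\kappa)$ is obtained from the GKZ ideal $H_A(\kappa)=I_A+J_{A,\kappa}$ by the restriction procedure of \eqref{eqrestrictionideal} followed by the change of variables $c_3=X_1+X_2$, $c_5=X_1+Y$, $c_7=X_2+Y$, I would first compute an explicit generating set for $H_A^{\text{res}}(\kappa)$ in the $(X_1,X_2,Y)$-coordinates. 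The multiplication by $Y$ is the crucial bookkeeping device: it clears the denominators that would otherwise appear when writing the operators of $I$ in terms of the restricted toric and Euler operators, so that the membership can be certified over the polynomial Weyl algebra $D_3$ rather than merely over $R_3$.

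For the reverse direction needed to upgrade containment to equality over the rational Weyl algebra, I would argue via holonomic rank rather than by exhibiting an explicit inclusion the other way. The excerpt records that the holonomic rank of $I$ is $4$ and that $H_A^{\text{res}}(\kappa)$ also has holonomic rank $4$. Passing to $R_3$, the rank is exactly the dimension of $R_3/R_3 J$ as a $\CC(X_1,X_2,Y)$-vector space for either ideal $J$. Multiplication on the right by the unit $Y$ is invertible in $R_3$, so $R_3\cdot(DIY)=R_3 I$ as left $R_3$-ideals, whence $R_3\,DIY$ and $R_3 I$ have the same holonomic rank $4$. From $DIY\subset H_A^{\text{res}}(\kappa)$ I get the inclusion of rational ideals $R_3\,DIY\subseteq R_3\,H_A^{\text{res}}(\kappa)$, and surjectivity of the induced map $R_3/R_3\,DIY \twoheadrightarrow R_3/R_3\,H_A^{\text{res}}(\kappa)$ forces $\dim R_3/R_3\,H_A^{\text{res}}(\kappa)\le 4$. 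Combined with the stated rank $4$ for $H_A^{\text{res}}(\kappa)$, the two quotients have equal finite dimension, so the surjection is an isomorphism and the two left $R_3$-ideals coincide.

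The main obstacle is the explicit containment step: I need the correct generators of the restricted GKZ system in the $(X_1,X_2,Y)$-variables, and the restriction ideal \eqref{eqrestrictionideal} is generally hard to compute and, as the excerpt warns, need not be a GKZ system again. I would handle this by working at the level of the rational Weyl algebra where the restriction is more tractable, using the homogeneity (Euler) operators from $J_{A,\kappa}$ to eliminate the variables being set to $1$ and to translate $\theta_{c_3},\theta_{c_5},\theta_{c_7}$ into the $\partial_{X_1},\partial_{X_2},\partial_Y$ language via the chain rule for the substitution $c_3=X_1+X_2$, $c_5=X_1+Y$, $c_7=X_2+Y$. The appearance of the mixed operator $\Delta_3$ in \eqref{eq:Delta3}, which the authors introduced precisely to fix the holonomic rank, strongly suggests that $\Delta_3\cdot Y$ is the image of the second toric generator $\partial_{c_1}\partial_{c_5}-\partial_{c_3}\partial_{c_4}$ (and its symmetric partner) after restriction, so I would verify that correspondence first; the operators $\Delta_1+\Delta_3$ and $\Delta_2+\Delta_3$ should then match combinations built from the first toric relation together with the quadratic shifts encoded by the Euler operators. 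Because these are finite, explicit manipulations in a three-variable Weyl algebra, they are naturally checked by the computer-algebra certification the authors already report (using \texttt{Singular} and \texttt{HolonomicFunctions}), so the mathematical content of the proof rests on the rank argument of the previous paragraph rather than on the symbol-pushing.
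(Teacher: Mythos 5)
Your plan coincides with the paper's proof, which is entirely computational: the authors certify the proposition by explicit membership checks in \texttt{Singular} and \texttt{HolonomicFunctions} (code provided on GitLab), and the equality over the rational Weyl algebra rests on the holonomic-rank-$4$ computations recorded for both $I$ and $H_A^{\text{res}}(\kappa)$, exactly as in your second paragraph. One small repair: right multiplication by the unit $Y$ does not in general fix a left ideal, so $R_3\cdot(DIY)=(R_3I)Y$ need not literally equal $R_3I$; however, $P\mapsto PY$ is an isomorphism of left $R_3$-modules carrying $R_3I$ onto $(R_3I)Y$, hence $R_3/R_3\cdot(DIY)\cong R_3/R_3I$ still has dimension $4$ and your finite-dimensional surjection argument goes through unchanged.
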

In particular, the proposition implies that the Weyl closures of the two \mbox{$D$-ideals} coincide, i.e., $W(H_A^{\text{res}}(\kappa)) = W(DIY)$.

\paragraph{Canonical forms}
Denote by $\Omega_i$, $i=1,\ldots,4$ the canonical form~\cite{PosGeom} of the triangle labelled by $i$ in \Cref{fig:hyperplanestwosite}. For the vector of integrals
\begin{align}\label{eq:canint}
    \left(\int_{\RR_{>0}^2} x_1^\varepsilon x_2^\varepsilon \, \Omega_1,\, \ldots,\int_{\RR_{>0}^2} x_1^\varepsilon x_2^\varepsilon  \, \Omega_4\right) \, ,
\end{align}
the resulting matrix differential system is in $\eps$-factorized form, cf.~\cite[(3.30)]{DEcosmological}. Denoting the first entry in \eqref{eq:canint} by~$\psi$, the full vector \eqref{eq:canint} can then be written as $(Q_1\bullet \psi,\ldots,Q_4\bullet \psi)^\top$  for $(Q_1,Q_2,Q_3,Q_4)=G\cdot (1,\partial_{X_1},\partial_{X_2},\partial_{X_1}\partial_{X_2})^\top$, where $G$ denotes the matrix 
{\small 
\begin{align}\label{eq:gauge}
\frac{1}{2Y\eps^2}\cdot \begin{pmatrix}
 2 Y \epsilon ^2 & -\epsilon ^2 \left(X_1-Y\right) & -\epsilon ^2 \left(X_2-Y\right) &  -\eps^2 \left(X_1+X_2\right)  \\
 0 & \eps  \left(X_1^2-Y^2\right)  & 0 & \eps \left(X_1+X_2\right)  \left(X_1+Y\right) \\
 0 & 0 & \epsilon  \left(X_2^2-Y^2\right) & \eps \left(X_1+X_2\right)  \left(X_2+Y\right) \\
 0 & 0 & 0 & -\left(X_1+X_2\right) \left(X_1+Y\right) \left(X_2+Y\right)
\end{pmatrix}^\top \! .
\end{align}
}

\noindent This is the gauge matrix for changing from the basis $(1,\partial_{X_1},\partial_{X_2},\partial_{X_1}\partial_{X_2})$ of $R_3/R_3I$ to the basis obtained from canonical forms, which yields the $\eps$-factorized form of the connection matrix.

\subsection{Partial fractioning of the flat space wavefunction}
Partial fraction decomposition is a widely used technique in particle physics and cosmology for simplifying the computation of Feynman integrals, cosmological integrals, and their differential equations. Although standard partial fraction decomposition applies to univariate rational functions, most expressions arising in physics are multivariate. New algorithmic methods for multivariate decompositions were presented in~\cite{HellerManteuffel}. We introduce a formula for the partial fraction decomposition of the wavefunction in flat space associated to a graph from~\eqref{eq:FSwave_explicit}, inspired by the Feynman rules for cosmology as presented in \cite[Section~2]{DEcosmological}. Our approach is entirely combinatorial and can be described in terms of  subgraphs of the original graph. We begin by introducing some definitions. 

Let $H$ be an oriented connected graph with $m>0$  vertices. Given a vertex $v\in V(H)$, we define its {degree} $\deg(v)$ to be the difference between the number of outgoing and incoming edges connected to~$v$. The degree of the graph~$H$ is $\deg(H)=\sum_{v\in V(H)}\deg(v)$. When we consider a subgraph of~$H$, we label the vertices in the subgraph by the degree of the 
vertices in~$H$ and, in slight abuse of notation, refer to these labels as ``degrees'' again. Notice that, in contrast to~$H$, which is always of degree $0$, the subgraphs can have a non-zero degree in general. 
We denote by $(H)^+$ the set of connected
subgraphs of $H$ which have degree strictly positive. We will denote the elements of $(H)^+$ by $H_i^+$. We write $H'\leq H$ if the graph $H'$ is a subgraph of~$H$.
\begin{definition}
    A tuple of $m-1$ indices $(i_1,\dots,i_{m-1})$ is said to be \textit{admissible} if for all $i_k,i_j\in \{i_1,\dots,i_{m-1}\}$, the subgraphs $H^+_{i_j}$ and $H^+_{i_k}$ do not share any vertex
    or one is contained in the other, i.e.,  $H^+_{i_k}\leq H^+_{i_j}$ or $H^+_{i_j}\leq H^+_{i_k}$. 
\end{definition}
\noindent
To a graph $H$ as above, we associate the rational function
\begin{equation}\label{eq:rational_function}
    R_{H}(X,Y) \,\coloneqq\, 
    \begin{cases}
    \frac{1}{\ell(H)} \quad\quad &\text{if } m=1,\\
    \frac{1}{\ell(H)} \cdot  \sum_{\substack{({i_1},\dots,i_{m-1})\\
    \text{admissible}}} \, \frac{1}{\ell(H^+_{i_1})\cdots \ell(H^+_{i_{m-1}})}  \quad\quad &\text{if } m>1,\\
    \end{cases}
\end{equation}
and $R_{H}(X,Y)=0$ if $H$ includes a directed cycle. In other words, $R_H(X,Y)$ is only non-zero if $H$ is a directed acyclic graph. Here, for a subgraph $H$, the linear form $\ell(H)$ is the one defined in~\eqref{eq:linear_forms_subgraphs}. We now have all the necessary tools to describe the decomposition of the flat space wavefunction. 

Let $G=(V,E)$ be a 
graph as in \Cref{sec:tools}, where $n$ denotes the number of vertices. 
We denote by $\mathcal{F}=\mathcal{F}(G) = \{H_1,H_2,\dots,H_r\}$ the set of spanning subgraphs of $G$, where we also take orientation into account. Since each~$H_i$ may be disconnected, we denote by $H_{i}^1,\ldots,H_i^{s_i}$ its connected components, i.e., the maximal connected subgraphs of~$H_i$.

\begin{conjecture}\label{conj1}
Let $G=(V,E)$ be a Feynman graph as above. Then the flat space wavefunction can be decomposed into partial fractions of the form 
\begin{align}\label{eq:PF_conj}
\psi_{\text{flat}}\, = \, \sum_{H_i \,\in\, \mathcal{F}} (-1)^{n-1-|E(H_i)|}\cdot\prod_{j=1}^{s_i}  R_{H_i^j}(X,Y) \, .
\end{align}
\end{conjecture}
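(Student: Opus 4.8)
The plan is to prove \Cref{conj1} by induction on the number of vertices~$n$ of~$G$, mirroring the recursive structure of the Feynman rules for cosmology that inspired the formula~\eqref{eq:PF_conj}. First I would verify the base case $n=1$ directly: a single-vertex graph has $\psi_{\text{flat}}=2^0\cdot 1/\ell_1 = 1/X_1$, and the right-hand side of~\eqref{eq:PF_conj} collapses to the unique spanning subgraph (the vertex itself) with $R_H = 1/\ell(H)$, so both sides agree. The key structural input is that the flat space wavefunction satisfies a recursion: cutting~$G$ along a single edge, or removing a vertex, expresses $\psi_{\text{flat}}^G$ in terms of wavefunctions of smaller graphs together with the linear forms $\ell(H)$ attached to the connected subgraphs. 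I would make this recursion precise, either from the canonical-form / cosmological-polytope description in~\cite{cosmowave,cosmopol} or, more combinatorially, from the ``Feynman rules'' of~\cite[Section~2]{DEcosmological}, and then show that the right-hand side of~\eqref{eq:PF_conj} obeys the \emph{same} recursion.

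The heart of the argument is a bookkeeping lemma showing that the admissibility condition on index tuples—namely that the positive-degree subgraphs $H^+_{i_j}$ are pairwise either vertex-disjoint or nested—exactly matches the combinatorics of iterated residues/energy-denominators in the wavefunction recursion. Concretely, I would fix an edge or vertex to peel off, partition the spanning subgraphs $H_i\in\mathcal{F}(G)$ according to how they restrict to the two smaller graphs, and check that the sign $(-1)^{n-1-|E(H_i)|}$ and the product $\prod_j R_{H_i^j}$ factor compatibly with the recursion. The nesting/disjointness condition should correspond to the statement that in the expansion of $R_{H_i^j}$ the subgraphs contributing positive-degree linear forms form a laminar family; this laminar structure is what makes the recursive grouping of terms possible.

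The step I expect to be the main obstacle is controlling the signs and the cancellations globally. Because each spanning subgraph carries the sign $(-1)^{n-1-|E(H_i)|}$ and each connected component $H_i^j$ contributes an internal sum over admissible tuples with its own signs implicit in the orientation (recall $R_H=0$ whenever $H$ contains a directed cycle), many terms on the right-hand side must cancel in pairs for the identity to hold. I would need to set up a sign-reversing involution—most naturally by toggling the orientation or presence of a distinguished edge—to kill the non-surviving contributions, and to verify that the surviving terms reassemble into the denominator product $\ell_1\cdots\ell_k$ times the adjoint numerator~$P$. A clean way to organize this is to work in the rational function field $\CC(X,Y)$ and compare residues along each hyperplane $\{\ell(H)=0\}$: since $\psi_{\text{flat}}$ is (conjecturally) a canonical form with residues $\pm1$, matching the iterated residues of both sides along flags of subgraphs would both pin down the signs and, by the fact that a rational function with prescribed simple poles and residues is determined up to a polynomial, reduce the identity to a degree/normalization check fixed by the factor $2^{n-1}\prod_{ij\in E}Y_{ij}$.

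Since this is stated as a \Cref{conj1} rather than a theorem, I would not expect the residue-matching to close completely without assuming the conjectural identification of~$P$ with the adjoint of the cosmological polytope from~\cite{cosmowave}; establishing that numerator identity independently is presumably the deeper open point, and my proof plan would either invoke it as a hypothesis or replace it by directly verifying the recursion at the level of the full rational functions.
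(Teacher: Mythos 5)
The paper does not prove this statement: it is stated as \Cref{conj1} precisely because no proof is known, and the authors only verify it in examples (the $3$-site chain, the one-loop bubble, and, for the claims about singularities, $n=2,3,4$). So there is no ``paper proof'' to match your argument against; the relevant question is whether your proposal closes the gap, and it does not.

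What you have written is a plan rather than a proof. Every load-bearing step is left prospective: the recursion for $\psi_{\text{flat}}$ under cutting an edge or removing a vertex is not stated, let alone proved compatible with the spanning-subgraph sum; the ``bookkeeping lemma'' matching admissible tuples (laminar families of positive-degree subgraphs) to iterated energy denominators is only named; and the sign-reversing involution that is supposed to produce the global cancellations is not constructed. The last point is where I would expect the plan to genuinely fail as stated: toggling the orientation or presence of a single edge changes the degrees of \emph{all} subgraphs containing one of its endpoints, hence changes which subgraphs lie in $(H)^+$ and which tuples are admissible, so it is far from clear that such a toggle pairs terms with equal denominators and opposite signs. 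Your residue-matching alternative has a circularity problem you partly acknowledge: computing the residues of $\psi_{\text{flat}}$ along $\{\ell(H)=0\}$ requires knowing the numerator $P$, which the paper itself only conjecturally identifies with the adjoint of the cosmological polytope; and even granting simple poles with known residues, a rational function of this shape is determined only up to terms with smaller polar locus, so the ``degree/normalization check'' needs an actual argument that no such terms survive. In short, the base case $n=1$ is the only step you actually carry out, and the statement remains a conjecture.
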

We believe that some kind of minimality---and in that sense uniqueness---may be stated for such a decomposition.
Specifically, the number of linear factors in the denominator of each term in the decomposition always coincides with the number of variables $X_i$ associated to vertices, and the degree of the numerator is always zero. On that note, we expect~\eqref{eq:PF_conj} to be the finest decomposition that does not introduce any new hyperplanes in the denominators.

\begin{remark}
    Note that the condition yielding $R_{H_i}=0$ is never satisfied when $G$ is a tree. To obtain $R_{H_i}=0$ for some~$H_i$, one needs a graph of genus at least~$1$, i.e., containing at least one ``loop'' in physics terminology. Some of such graphs are shown in \Cref{fig:loops}. 
\begin{figure}%[h]
  \begin{center}
\tikzset{every picture/.style={line width=0.75pt}} %set default line width to 0.75pt        
\begin{tikzpicture}[x=0.75pt,y=0.75pt,yscale=-1,xscale=1,scale=0.8]

%Shape: Circle [id:dp5291874941180277] 
\draw  [fill={rgb, 255:red, 0; green, 0; blue, 0 }  ,fill opacity=1 ] (151,225) .. controls (151,222.79) and (152.79,221) .. (155,221) .. controls (157.21,221) and (159,222.79) .. (159,225) .. controls (159,227.21) and (157.21,229) .. (155,229) .. controls (152.79,229) and (151,227.21) .. (151,225) -- cycle ;
%Shape: Circle [id:dp9145318677661527] 
\draw  [fill={rgb, 255:red, 0; green, 0; blue, 0 }  ,fill opacity=1 ] (191,225) .. controls (191,222.79) and (192.79,221) .. (195,221) .. controls (197.21,221) and (199,222.79) .. (199,225) .. controls (199,227.21) and (197.21,229) .. (195,229) .. controls (192.79,229) and (191,227.21) .. (191,225) -- cycle ;
\draw   (172,201) -- (178,204.75) -- (172,208.5) ;
\draw   (177.98,248.51) -- (172,244.74) -- (178.02,241.01) ;
%Shape: Circle [id:dp03562121169269905] 
\draw   (155,225) .. controls (155,213.95) and (163.95,205) .. (175,205) .. controls (186.05,205) and (195,213.95) .. (195,225) .. controls (195,236.05) and (186.05,245) .. (175,245) .. controls (163.95,245) and (155,236.05) .. (155,225) -- cycle ;
%Shape: Circle [id:dp059318570794063996] 
\draw  [fill={rgb, 255:red, 0; green, 0; blue, 0 }  ,fill opacity=1 ] (240,225) .. controls (240,222.79) and (241.79,221) .. (244,221) .. controls (246.21,221) and (248,222.79) .. (248,225) .. controls (248,227.21) and (246.21,229) .. (244,229) .. controls (241.79,229) and (240,227.21) .. (240,225) -- cycle ;
%Shape: Circle [id:dp24394562480077164] 
\draw  [fill={rgb, 255:red, 0; green, 0; blue, 0 }  ,fill opacity=1 ] (280,225) .. controls (280,222.79) and (281.79,221) .. (284,221) .. controls (286.21,221) and (288,222.79) .. (288,225) .. controls (288,227.21) and (286.21,229) .. (284,229) .. controls (281.79,229) and (280,227.21) .. (280,225) -- cycle ;
\draw   (267.12,208.4) -- (261,204.85) -- (266.88,200.9) ;
\draw   (260.99,241.01) -- (267,244.74) -- (261.01,248.51) ;
%Shape: Circle [id:dp7282246262612824] 
\draw   (244,225) .. controls (244,213.95) and (252.95,205) .. (264,205) .. controls (275.05,205) and (284,213.95) .. (284,225) .. controls (284,236.05) and (275.05,245) .. (264,245) .. controls (252.95,245) and (244,236.05) .. (244,225) -- cycle ;
%Shape: Circle [id:dp6988040696421993] 
\draw  [fill={rgb, 255:red, 0; green, 0; blue, 0 }  ,fill opacity=1 ] (429,225) .. controls (429,222.79) and (430.79,221) .. (433,221) .. controls (435.21,221) and (437,222.79) .. (437,225) .. controls (437,227.21) and (435.21,229) .. (433,229) .. controls (430.79,229) and (429,227.21) .. (429,225) -- cycle ;
%Shape: Circle [id:dp8402073232571812] 
\draw  [fill={rgb, 255:red, 0; green, 0; blue, 0 }  ,fill opacity=1 ] (469,225) .. controls (469,222.79) and (470.79,221) .. (473,221) .. controls (475.21,221) and (477,222.79) .. (477,225) .. controls (477,227.21) and (475.21,229) .. (473,229) .. controls (470.79,229) and (469,227.21) .. (469,225) -- cycle ;
\draw   (456.12,208.4) -- (450,204.85) -- (455.88,200.9) ;
\draw   (449.99,241.01) -- (456,244.74) -- (450.01,248.51) ;
%Shape: Circle [id:dp022108375955254722] 
\draw   (433,225) .. controls (433,213.95) and (441.95,205) .. (453,205) .. controls (464.05,205) and (473,213.95) .. (473,225) .. controls (473,236.05) and (464.05,245) .. (453,245) .. controls (441.95,245) and (433,236.05) .. (433,225) -- cycle ;
%Curve Lines [id:da41603098599680455] 
\draw    (433,225) .. controls (454,207) and (461,216) .. (473,225) ;
%Curve Lines [id:da4455458061540367] 
\draw    (433,225) .. controls (456,244) and (466,229) .. (473,225) ;
\draw   (449.99,231.01) -- (456,234.74) -- (450.01,238.51) ;
\draw   (456.12,218.4) -- (450,214.85) -- (455.88,210.9) ;
%Shape: Triangle [id:dp24881970591899694] 
\draw   (358,209) -- (384,244.5) -- (332,244.5) -- cycle ;
%Shape: Circle [id:dp4321994995538274] 
\draw  [fill={rgb, 255:red, 0; green, 0; blue, 0 }  ,fill opacity=1 ] (354,209) .. controls (354,206.79) and (355.79,205) .. (358,205) .. controls (360.21,205) and (362,206.79) .. (362,209) .. controls (362,211.21) and (360.21,213) .. (358,213) .. controls (355.79,213) and (354,211.21) .. (354,209) -- cycle ;
%Shape: Circle [id:dp11412206856988139] 
\draw  [fill={rgb, 255:red, 0; green, 0; blue, 0 }  ,fill opacity=1 ] (329,244.5) .. controls (329,242.29) and (330.79,240.5) .. (333,240.5) .. controls (335.21,240.5) and (337,242.29) .. (337,244.5) .. controls (337,246.71) and (335.21,248.5) .. (333,248.5) .. controls (330.79,248.5) and (329,246.71) .. (329,244.5) -- cycle ;
%Shape: Circle [id:dp6078781970107119] 
\draw  [fill={rgb, 255:red, 0; green, 0; blue, 0 }  ,fill opacity=1 ] (380,244.5) .. controls (380,242.29) and (381.79,240.5) .. (384,240.5) .. controls (386.21,240.5) and (388,242.29) .. (388,244.5) .. controls (388,246.71) and (386.21,248.5) .. (384,248.5) .. controls (381.79,248.5) and (380,246.71) .. (380,244.5) -- cycle ;
\draw   (342.2,224.91) -- (348.79,222.35) -- (348.21,229.4) ;
\draw   (371.58,220.21) -- (371.6,227.29) -- (365.23,224.21) ;
\draw   (360.99,248.51) -- (355,244.74) -- (361.01,241.01) ;

% Text Node
\draw (145,205.4) node [anchor=north west][inner sep=0.75pt]    {$0$};
% Text Node
\draw (196,206.4) node [anchor=north west][inner sep=0.75pt]    {$0$};
% Text Node
\draw (234,205.4) node [anchor=north west][inner sep=0.75pt]    {$0$};
% Text Node
\draw (285,206.4) node [anchor=north west][inner sep=0.75pt]    {$0$};
% Text Node
\draw (423,205.4) node [anchor=north west][inner sep=0.75pt]    {$0$};
% Text Node
\draw (474,206.4) node [anchor=north west][inner sep=0.75pt]    {$0$};
% Text Node
\draw (325,223.4) node [anchor=north west][inner sep=0.75pt]    {$0$};
% Text Node
\draw (363,197.4) node [anchor=north west][inner sep=0.75pt]    {$0$};
% Text Node
\draw (381,223.4) node [anchor=north west][inner sep=0.75pt]    {$0$};
\end{tikzpicture}
\caption{Some examples of graphs with $R_{H_i}=0$.}
\label{fig:loops}
\end{center}
\end{figure}
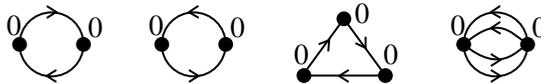
\end{remark}

\begin{example}[$3$-site chain] Here, we demonstrate our procedure at the example of the flat space wavefunction of the $3$-site chain in detail. An illustration of the graph with the labeling of the kinematic parameters can be found in \Cref{fig:23sitestar}. The linear forms associated to its connected subgraphs are
\begin{align}\label{eq:ell_i_3site}
    \ell_1 & \,=\, X_1+Y_{12} \, , & \ell_2 &\,=\, X_2 + Y_{12} + Y_{23}\,, &  \ell_3 &\,=\, X_3  + Y_{23} \, ,\\ 
    \ell_4 &\,=\, X_1+X_2+X_3 \, 
     \, ,&  \ell_5 &\,=\, X_1 + X_2+ Y_{23} \, ,& \ell_6 &\,=\, X_2 + X_3 + Y_{12} \, . \nonumber
\end{align} 
The set $\F = \{H_1,\dots,H_9\}$ contains nine elements that we list in \Cref{fig:adm}.
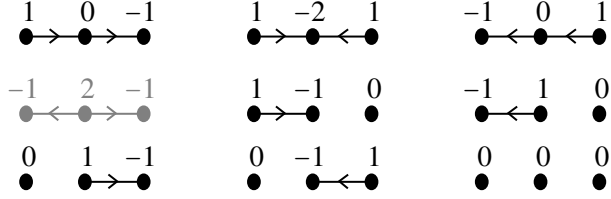
\begin{figure}[h]
  \begin{center}
\tikzset{every picture/.style={line width=0.75pt}} %set default line width to 0.75pt        

\begin{tikzpicture}[x=0.75pt,y=0.75pt,yscale=-1,xscale=.8,scale=0.92]
%uncomment if require: \path (0,300); %set diagram left start at 0, and has height of 300

%Straight Lines [id:da4814727617222052] 
\draw    (40,40) -- (120,40) ;
%Shape: Circle [id:dp7911605636104293] 
\draw  [fill={rgb, 255:red, 0; green, 0; blue, 0 }  ,fill opacity=1 ] (36,40) .. controls (36,37.79) and (37.79,36) .. (40,36) .. controls (42.21,36) and (44,37.79) .. (44,40) .. controls (44,42.21) and (42.21,44) .. (40,44) .. controls (37.79,44) and (36,42.21) .. (36,40) -- cycle ;
%Shape: Circle [id:dp3585689806755781] 
\draw  [fill={rgb, 255:red, 0; green, 0; blue, 0 }  ,fill opacity=1 ] (116,40) .. controls (116,37.79) and (117.79,36) .. (120,36) .. controls (122.21,36) and (124,37.79) .. (124,40) .. controls (124,42.21) and (122.21,44) .. (120,44) .. controls (117.79,44) and (116,42.21) .. (116,40) -- cycle ;
%Shape: Circle [id:dp176685691638933] 
\draw  [fill={rgb, 255:red, 0; green, 0; blue, 0 }  ,fill opacity=1 ] (76,40) .. controls (76,37.79) and (77.79,36) .. (80,36) .. controls (82.21,36) and (84,37.79) .. (84,40) .. controls (84,42.21) and (82.21,44) .. (80,44) .. controls (77.79,44) and (76,42.21) .. (76,40) -- cycle ;
%Straight Lines [id:da4282486330288513] 
\draw    (195,40) -- (275,40) ;
%Shape: Circle [id:dp970154032136346] 
\draw  [fill={rgb, 255:red, 0; green, 0; blue, 0 }  ,fill opacity=1 ] (191,40) .. controls (191,37.79) and (192.79,36) .. (195,36) .. controls (197.21,36) and (199,37.79) .. (199,40) .. controls (199,42.21) and (197.21,44) .. (195,44) .. controls (192.79,44) and (191,42.21) .. (191,40) -- cycle ;
%Shape: Circle [id:dp7432772124785778] 
\draw  [fill={rgb, 255:red, 0; green, 0; blue, 0 }  ,fill opacity=1 ] (271,40) .. controls (271,37.79) and (272.79,36) .. (275,36) .. controls (277.21,36) and (279,37.79) .. (279,40) .. controls (279,42.21) and (277.21,44) .. (275,44) .. controls (272.79,44) and (271,42.21) .. (271,40) -- cycle ;
%Shape: Circle [id:dp5773628753764164] 
\draw  [fill={rgb, 255:red, 0; green, 0; blue, 0 }  ,fill opacity=1 ] (231,40) .. controls (231,37.79) and (232.79,36) .. (235,36) .. controls (237.21,36) and (239,37.79) .. (239,40) .. controls (239,42.21) and (237.21,44) .. (235,44) .. controls (232.79,44) and (231,42.21) .. (231,40) -- cycle ;
%Straight Lines [id:da40862192746983217] 
\draw    (350,40) -- (430,40) ;
%Shape: Circle [id:dp5440922625222455] 
\draw  [fill={rgb, 255:red, 0; green, 0; blue, 0 }  ,fill opacity=1 ] (346,40) .. controls (346,37.79) and (347.79,36) .. (350,36) .. controls (352.21,36) and (354,37.79) .. (354,40) .. controls (354,42.21) and (352.21,44) .. (350,44) .. controls (347.79,44) and (346,42.21) .. (346,40) -- cycle ;
%Shape: Circle [id:dp765748400173299] 
\draw  [fill={rgb, 255:red, 0; green, 0; blue, 0 }  ,fill opacity=1 ] (426,40) .. controls (426,37.79) and (427.79,36) .. (430,36) .. controls (432.21,36) and (434,37.79) .. (434,40) .. controls (434,42.21) and (432.21,44) .. (430,44) .. controls (427.79,44) and (426,42.21) .. (426,40) -- cycle ;
%Shape: Circle [id:dp6197299742617521] 
\draw  [fill={rgb, 255:red, 0; green, 0; blue, 0 }  ,fill opacity=1 ] (386,40) .. controls (386,37.79) and (387.79,36) .. (390,36) .. controls (392.21,36) and (394,37.79) .. (394,40) .. controls (394,42.21) and (392.21,44) .. (390,44) .. controls (387.79,44) and (386,42.21) .. (386,40) -- cycle ;
%Straight Lines [id:da17878448033673044] 
\draw [color={gray}  ,draw opacity=1 ]   (40,82) -- (120,82) ;
%Shape: Circle [id:dp5354298546358762] 
\draw  [color={gray}  ,draw opacity=1 ][fill={rgb, 255:red, 128; green, 128; blue, 128 }  ,fill opacity=1 ] (36,82) .. controls (36,79.79) and (37.79,78) .. (40,78) .. controls (42.21,78) and (44,79.79) .. (44,82) .. controls (44,84.21) and (42.21,86) .. (40,86) .. controls (37.79,86) and (36,84.21) .. (36,82) -- cycle ;
%Shape: Circle [id:dp8172402930863794] 
\draw  [color={rgb, 255:red, 128; green, 128; blue, 128 }  ,draw opacity=1 ][fill={rgb, 255:red, 128; green, 128; blue, 128 }  ,fill opacity=1 ] (116,82) .. controls (116,79.79) and (117.79,78) .. (120,78) .. controls (122.21,78) and (124,79.79) .. (124,82) .. controls (124,84.21) and (122.21,86) .. (120,86) .. controls (117.79,86) and (116,84.21) .. (116,82) -- cycle ;
%Shape: Circle [id:dp10218206144043118] 
\draw  [color={rgb, 255:red, 128; green, 128; blue, 128 }  ,draw opacity=1 ][fill={rgb, 255:red, 128; green, 128; blue, 128 }  ,fill opacity=1 ] (76,82) .. controls (76,79.79) and (77.79,78) .. (80,78) .. controls (82.21,78) and (84,79.79) .. (84,82) .. controls (84,84.21) and (82.21,86) .. (80,86) .. controls (77.79,86) and (76,84.21) .. (76,82) -- cycle ;
%Straight Lines [id:da9887763824217435] 
\draw    (195,82) -- (235,82) ;
%Shape: Circle [id:dp2721303307038476] 
\draw  [fill={rgb, 255:red, 0; green, 0; blue, 0 }  ,fill opacity=1 ] (191,82) .. controls (191,79.79) and (192.79,78) .. (195,78) .. controls (197.21,78) and (199,79.79) .. (199,82) .. controls (199,84.21) and (197.21,86) .. (195,86) .. controls (192.79,86) and (191,84.21) .. (191,82) -- cycle ;
%Shape: Circle [id:dp9208835750859898] 
\draw  [fill={rgb, 255:red, 0; green, 0; blue, 0 }  ,fill opacity=1 ] (271,82) .. controls (271,79.79) and (272.79,78) .. (275,78) .. controls (277.21,78) and (279,79.79) .. (279,82) .. controls (279,84.21) and (277.21,86) .. (275,86) .. controls (272.79,86) and (271,84.21) .. (271,82) -- cycle ;
%Shape: Circle [id:dp2065981156021759] 
\draw  [fill={rgb, 255:red, 0; green, 0; blue, 0 }  ,fill opacity=1 ] (231,82) .. controls (231,79.79) and (232.79,78) .. (235,78) .. controls (237.21,78) and (239,79.79) .. (239,82) .. controls (239,84.21) and (237.21,86) .. (235,86) .. controls (232.79,86) and (231,84.21) .. (231,82) -- cycle ;
%Straight Lines [id:da36295980387895943] 
\draw    (350,82) -- (390,82) ;
%Shape: Circle [id:dp7911387246592154] 
\draw  [fill={rgb, 255:red, 0; green, 0; blue, 0 }  ,fill opacity=1 ] (346,82) .. controls (346,79.79) and (347.79,78) .. (350,78) .. controls (352.21,78) and (354,79.79) .. (354,82) .. controls (354,84.21) and (352.21,86) .. (350,86) .. controls (347.79,86) and (346,84.21) .. (346,82) -- cycle ;
%Shape: Circle [id:dp22619124479591401] 
\draw  [fill={rgb, 255:red, 0; green, 0; blue, 0 }  ,fill opacity=1 ] (426,82) .. controls (426,79.79) and (427.79,78) .. (430,78) .. controls (432.21,78) and (434,79.79) .. (434,82) .. controls (434,84.21) and (432.21,86) .. (430,86) .. controls (427.79,86) and (426,84.21) .. (426,82) -- cycle ;
%Shape: Circle [id:dp34376664992092953] 
\draw  [fill={rgb, 255:red, 0; green, 0; blue, 0 }  ,fill opacity=1 ] (386,82) .. controls (386,79.79) and (387.79,78) .. (390,78) .. controls (392.21,78) and (394,79.79) .. (394,82) .. controls (394,84.21) and (392.21,86) .. (390,86) .. controls (387.79,86) and (386,84.21) .. (386,82) -- cycle ;
%Straight Lines [id:da7532330629680708] 
\draw    (80,119) -- (120,119) ;
%Shape: Circle [id:dp6258997331047] 
\draw  [fill={rgb, 255:red, 0; green, 0; blue, 0 }  ,fill opacity=1 ] (36,119) .. controls (36,116.79) and (37.79,115) .. (40,115) .. controls (42.21,115) and (44,116.79) .. (44,119) .. controls (44,121.21) and (42.21,123) .. (40,123) .. controls (37.79,123) and (36,121.21) .. (36,119) -- cycle ;
%Shape: Circle [id:dp5636254477447511] 
\draw  [fill={rgb, 255:red, 0; green, 0; blue, 0 }  ,fill opacity=1 ] (116,119) .. controls (116,116.79) and (117.79,115) .. (120,115) .. controls (122.21,115) and (124,116.79) .. (124,119) .. controls (124,121.21) and (122.21,123) .. (120,123) .. controls (117.79,123) and (116,121.21) .. (116,119) -- cycle ;
%Shape: Circle [id:dp1588562842696375] 
\draw  [fill={rgb, 255:red, 0; green, 0; blue, 0 }  ,fill opacity=1 ] (76,119) .. controls (76,116.79) and (77.79,115) .. (80,115) .. controls (82.21,115) and (84,116.79) .. (84,119) .. controls (84,121.21) and (82.21,123) .. (80,123) .. controls (77.79,123) and (76,121.21) .. (76,119) -- cycle ;
%Straight Lines [id:da24174732202746885] 
\draw    (235,119) -- (275,119) ;
%Shape: Circle [id:dp6194452698584065] 
\draw  [fill={rgb, 255:red, 0; green, 0; blue, 0 }  ,fill opacity=1 ] (191,119) .. controls (191,116.79) and (192.79,115) .. (195,115) .. controls (197.21,115) and (199,116.79) .. (199,119) .. controls (199,121.21) and (197.21,123) .. (195,123) .. controls (192.79,123) and (191,121.21) .. (191,119) -- cycle ;
%Shape: Circle [id:dp29010008703970636] 
\draw  [fill={rgb, 255:red, 0; green, 0; blue, 0 }  ,fill opacity=1 ] (271,119) .. controls (271,116.79) and (272.79,115) .. (275,115) .. controls (277.21,115) and (279,116.79) .. (279,119) .. controls (279,121.21) and (277.21,123) .. (275,123) .. controls (272.79,123) and (271,121.21) .. (271,119) -- cycle ;
%Shape: Circle [id:dp9530806723878054] 
\draw  [fill={rgb, 255:red, 0; green, 0; blue, 0 }  ,fill opacity=1 ] (231,119) .. controls (231,116.79) and (232.79,115) .. (235,115) .. controls (237.21,115) and (239,116.79) .. (239,119) .. controls (239,121.21) and (237.21,123) .. (235,123) .. controls (232.79,123) and (231,121.21) .. (231,119) -- cycle ;
%Shape: Circle [id:dp8745018308539474] 
\draw  [fill={rgb, 255:red, 0; green, 0; blue, 0 }  ,fill opacity=1 ] (346,119) .. controls (346,116.79) and (347.79,115) .. (350,115) .. controls (352.21,115) and (354,116.79) .. (354,119) .. controls (354,121.21) and (352.21,123) .. (350,123) .. controls (347.79,123) and (346,121.21) .. (346,119) -- cycle ;
%Shape: Circle [id:dp4837470677675746] 
\draw  [fill={rgb, 255:red, 0; green, 0; blue, 0 }  ,fill opacity=1 ] (426,119) .. controls (426,116.79) and (427.79,115) .. (430,115) .. controls (432.21,115) and (434,116.79) .. (434,119) .. controls (434,121.21) and (432.21,123) .. (430,123) .. controls (427.79,123) and (426,121.21) .. (426,119) -- cycle ;
%Shape: Circle [id:dp3025188341126668] 
\draw  [fill={rgb, 255:red, 0; green, 0; blue, 0 }  ,fill opacity=1 ] (386,119) .. controls (386,116.79) and (387.79,115) .. (390,115) .. controls (392.21,115) and (394,116.79) .. (394,119) .. controls (394,121.21) and (392.21,123) .. (390,123) .. controls (387.79,123) and (386,121.21) .. (386,119) -- cycle ;
\draw   (96,115) -- (102,118.75) -- (96,122.5) ;
\draw   (56,36) -- (62,39.75) -- (56,43.5) ;
\draw   (95,36) -- (101,39.75) -- (95,43.5) ;
\draw   (210,78) -- (216,81.75) -- (210,85.5) ;
\draw  [color={rgb, 255:red, 128; green, 128; blue, 128 }  ,draw opacity=1 ] (95,78) -- (101,81.75) -- (95,85.5) ;
\draw   (210,36) -- (216,39.75) -- (210,43.5) ;
\draw   (258.91,122.57) -- (253,118.68) -- (259.09,115.07) ;
\draw   (373.91,85.57) -- (368,81.68) -- (374.09,78.07) ;
\draw  [color={rgb, 255:red, 128; green, 128; blue, 128 }  ,draw opacity=1 ] (61.91,85.57) -- (56,81.68) -- (62.09,78.07) ;
\draw   (413.91,43.57) -- (408,39.68) -- (414.09,36.07) ;
\draw   (373.91,43.57) -- (368,39.68) -- (374.09,36.07) ;
\draw   (258.91,43.57) -- (253,39.68) -- (259.09,36.07) ;

% Text Node
\draw (35,19.4) node [anchor=north west][inner sep=0.75pt]    {$1$};
% Text Node
\draw (75,19.4) node [anchor=north west][inner sep=0.75pt]    {$0$};
% Text Node
\draw (105,19.4) node [anchor=north west][inner sep=0.75pt]    {$-1$};
% Text Node
\draw (190,19.4) node [anchor=north west][inner sep=0.75pt]    {$1$};
% Text Node
\draw (220,19.4) node [anchor=north west][inner sep=0.75pt]    {$-2$};
% Text Node
\draw (270,19.4) node [anchor=north west][inner sep=0.75pt]    {$1$};
% Text Node
\draw (335,19.4) node [anchor=north west][inner sep=0.75pt]    {$-1$};
% Text Node
\draw (385,19.4) node [anchor=north west][inner sep=0.75pt]    {$0$};
% Text Node
\draw (425,19.4) node [anchor=north west][inner sep=0.75pt]    {$1$};
% Text Node
\draw (25,61.4) node [anchor=north west][inner sep=0.75pt]  [color={gray}  ,opacity=1 ]  {$-1$};
% Text Node
\draw (75,61.4) node [anchor=north west][inner sep=0.75pt]  [color={gray}  ,opacity=1 ]  {$2$};
% Text Node
\draw (105,61.4) node [anchor=north west][inner sep=0.75pt]  [color={gray}  ,opacity=1 ]  {$-1$};
% Text Node
\draw (190,61.4) node [anchor=north west][inner sep=0.75pt]    {$1$};
% Text Node
\draw (220,61.4) node [anchor=north west][inner sep=0.75pt]    {$-1$};
% Text Node
\draw (270,61.4) node [anchor=north west][inner sep=0.75pt]    {$0$};
% Text Node
\draw (335,61.4) node [anchor=north west][inner sep=0.75pt]    {$-1$};
% Text Node
\draw (425,61.4) node [anchor=north west][inner sep=0.75pt]    {$0$};
% Text Node
\draw (385,61.4) node [anchor=north west][inner sep=0.75pt]    {$1$};
% Text Node
\draw (75,98.4) node [anchor=north west][inner sep=0.75pt]    {$1$};
% Text Node
\draw (35,98.4) node [anchor=north west][inner sep=0.75pt]    {$0$};
% Text Node
\draw (105,98.4) node [anchor=north west][inner sep=0.75pt]    {$-1$};
% Text Node
\draw (270,98.4) node [anchor=north west][inner sep=0.75pt]    {$1$};
% Text Node
\draw (190,98.4) node [anchor=north west][inner sep=0.75pt]    {$0$};
% Text Node
\draw (220,98.4) node [anchor=north west][inner sep=0.75pt]    {$-1$};
% Text Node
\draw (345,98.4) node [anchor=north west][inner sep=0.75pt]    {$0$};
% Text Node
\draw (385,98.4) node [anchor=north west][inner sep=0.75pt]    {$0$};
% Text Node
\draw (425,98.4) node [anchor=north west][inner sep=0.75pt]    {$0$};

\end{tikzpicture}
\caption{All oriented spanning trees for the $3$-site chain. The numbers represent the degree of each vertex. The one in gray is the only one which is not totally time-ordered in the sense of \Cref{def:tto}.
}
\label{fig:adm}
\end{center}
\end{figure}

\noindent We now construct the rational function in~\eqref{eq:rational_function} for the three oriented graphs in the left-most column of \Cref{fig:adm}. Let us denote them, from top to bottom, by $H_1,H_2,H_3$. Note that, as $H_3$ is disconnected, we denote $H_3^1,H_3^2$ its connected components. We have
\begin{align*}
\left(H_1\right)^+ \,=\, 
\biggl\{\treeConefirst \,,\,\treeCone\biggr\} \, ,
\ \left(H_2\right)^+ \,=\, \biggl\{ \treeCtwofirst\, ,\, \treeCtwosecond \, , \, \treeCtwothird\biggr\} \, ,
\ 
\left(H_3^2\right)^+ \,=\, \biggl\{\treeCthree\biggr\} \, , 
\end{align*}
and $(H_3^1)^+ = \varnothing$, which, using the definition in \eqref{eq:rational_function}, give
\[R_{H_1}=\,\frac{1}{\ell_4}\cdot\frac{1}{\ell_1\ell_5} \, , \quad
R_{H_2}=\,\frac{1}{\ell_4}\left(\frac{1}{\ell_2\ell_5}+\frac{1}{\ell_2\ell_6}\right) \, ,\quad
R_{H_3^1}\cdot R_{H_3^2} =\, \frac{1}{\ell_1} \left(\frac{1}{\ell_2}\cdot\frac{1}{\ell_6}\right) \, . \]
Note that the two summands in $R_{H_2}$ are given by the two possible choices of admissible sets given by first and third graph or first and third graph in~$(H_2)^+$.
The flat space wavefunction of the $3$-site chain is given by 
\begin{align*}
\psi^{\text{flat}}_3 \,=\, \frac{4Y_{12}Y_{23}(\ell_5+\ell_6)}{\ell_1 \ell_2\ell_3\ell_4\ell_5\ell_6} \ \in \,  \CC( X_1,X_2,X_3,Y_{12},Y_{23})  \, , 
\end{align*}
with $\ell_1,\ldots,\ell_6$ as in~\eqref{eq:ell_i_3site}, arising from~\Cref{eq:linear_forms_subgraphs}. The term $\ell_5+\ell_6$ in the numerator can be obtained via the recursion formula introduced in~\cite{cosmowave}; see also the appendix of \cite{FevolaMatsubara} for an explanation of this method. 

One can check that $\psi^{\text{flat}}_3$ decomposes as
\begin{align}\label{eq:PF_3site}\begin{split}
    \psi^{\text{flat}}_3 \ =\ \frac{1}{\ell_1\ell_4\ell_5} \,+\,\frac{1}{\ell_1\ell_3\ell_4} \,+\,\frac{1}{\ell_3\ell_4\ell_6} \,+\,
    \left(\frac{1}{\ell_2\ell_4\ell_5} \,+\,\frac{1}{\ell_2\ell_4\ell_6}\right) \, \\
    -\,
    \frac{1}{\ell_1\ell_3\ell_5} \,-\,\frac{1}{\ell_2\ell_3\ell_5} 
    \,-\,\frac{1}{\ell_1\ell_2\ell_6} \,-\,\frac{1}{\ell_1\ell_3\ell_6} \,+\,\frac{1}{\ell_1\ell_2\ell_3} 
    \, .
\end{split}\end{align}
In physics, this sum of ten terms naturally arises when computing the so-called ``bulk integrals'' constructed from a set of Feynman rules. 
For our current discussion, it suffices to know that, according to these rules, every vertex is associated to a time integral and physically represents an interaction between particles.
Every edge is represented by a ``bulk-to-bulk propagator'': a Green's function for the wave equation with cosmological boundary conditions. 
The propagator tells us how to order the two time integrals corresponding to the two vertices connected to the edge. Let $\nu_1$ and $\nu_2$ be two vertices. There are three options appearing as a sum in the bulk-to-bulk propagator: the vertex $\nu_1$
happens before $\nu_2$ in time, $\nu_2$ before $\nu_1$, or the two are unordered, i.e., disconnected. The integrand consists of a product of $n-1$ propagators which, after expanding, results in $3^{n-1}$ different terms. All of these different time-orderings are in one-to-one correspondence with the 
diagrams from the combinatorial approach allowing for a physical interpretation. An edge corresponds to a particle travelling between two vertices with the orientation of the edge describing the direction in time. In other words, all the oriented spanning graphs are different time-orderings in a physical process. 
\end{example}

\begin{example}[One-loop bubble]
In this example, we work out the partial fraction decomposition of the wavefunction in flat space for the genus-$1$ graph with two vertices, called ``one-loop bubble" in the physics literature. For an illustration of this graph, see the Introduction of~\cite{cosmowave}. The hyperplanes appearing in $\psi^{\text{flat}}_{1\text{-loop}}$ can be derived from the connected subgraphs by using \eqref{eq:linear_forms_subgraphs}. They are
\[
    \ell_1\,=\, X_1+Y+Y'\!,\, \ell_2 \,=\, X_2 + Y + Y'\!,\,   \ell_3 \,=\, X_1 + X_2 + 2Y' \!,\,
    \ell_4 \,=\, X_1+X_2+2Y  
     , \, \ell_5 \,=\, X_1 + X_2 .
\]
The set $\mathcal{F}$ contains nine elements, two of which are the leftmost graphs in \Cref{fig:loops}, which will not contribute in the decomposition, while the other ones are displayed in \Cref{fig:1loop}. 
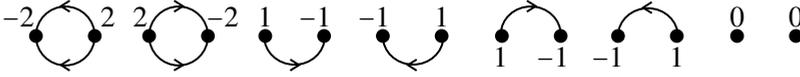
\begin{figure}
  \begin{center}
\tikzset{every picture/.style={line width=0.75pt}} %set default line width to 0.75pt        

\begin{tikzpicture}[x=0.75pt,y=0.75pt,yscale=-0.8,xscale=0.8,scale=0.92]
%uncomment if require: \path (0,300); %set diagram left start at 0, and has height of 300

%Shape: Circle [id:dp732746340375042] 
\draw  [fill={rgb, 255:red, 0; green, 0; blue, 0 }  ,fill opacity=1 ] (102,45) .. controls (102,42.79) and (103.79,41) .. (106,41) .. controls (108.21,41) and (110,42.79) .. (110,45) .. controls (110,47.21) and (108.21,49) .. (106,49) .. controls (103.79,49) and (102,47.21) .. (102,45) -- cycle ;
%Shape: Circle [id:dp8507263579082318] 
\draw  [fill={rgb, 255:red, 0; green, 0; blue, 0 }  ,fill opacity=1 ] (142,45) .. controls (142,42.79) and (143.79,41) .. (146,41) .. controls (148.21,41) and (150,42.79) .. (150,45) .. controls (150,47.21) and (148.21,49) .. (146,49) .. controls (143.79,49) and (142,47.21) .. (142,45) -- cycle ;
\draw   (123,21) -- (129,24.75) -- (123,28.5) ;
\draw   (123.1,60.93) -- (129,64.83) -- (122.91,68.42) ;
%Shape: Circle [id:dp41622218047337545] 
\draw   (106,45) .. controls (106,33.95) and (114.95,25) .. (126,25) .. controls (137.05,25) and (146,33.95) .. (146,45) .. controls (146,56.05) and (137.05,65) .. (126,65) .. controls (114.95,65) and (106,56.05) .. (106,45) -- cycle ;
%Shape: Circle [id:dp14993647101024643] 
\draw  [fill={rgb, 255:red, 0; green, 0; blue, 0 }  ,fill opacity=1 ] (25,45) .. controls (25,42.79) and (26.79,41) .. (29,41) .. controls (31.21,41) and (33,42.79) .. (33,45) .. controls (33,47.21) and (31.21,49) .. (29,49) .. controls (26.79,49) and (25,47.21) .. (25,45) -- cycle ;
%Shape: Circle [id:dp786880767349835] 
\draw  [fill={rgb, 255:red, 0; green, 0; blue, 0 }  ,fill opacity=1 ] (65,45) .. controls (65,42.79) and (66.79,41) .. (69,41) .. controls (71.21,41) and (73,42.79) .. (73,45) .. controls (73,47.21) and (71.21,49) .. (69,49) .. controls (66.79,49) and (65,47.21) .. (65,45) -- cycle ;
\draw   (52,28.5) -- (46,24.75) -- (52,21) ;
\draw   (51.97,68.52) -- (46,64.73) -- (52.03,61.02) ;
%Shape: Circle [id:dp6926635128287364] 
\draw   (29,45) .. controls (29,33.95) and (37.95,25) .. (49,25) .. controls (60.05,25) and (69,33.95) .. (69,45) .. controls (69,56.05) and (60.05,65) .. (49,65) .. controls (37.95,65) and (29,56.05) .. (29,45) -- cycle ;
%Shape: Circle [id:dp7349662565135886] 
\draw  [fill={rgb, 255:red, 0; green, 0; blue, 0 }  ,fill opacity=1 ] (181,45) .. controls (181,42.79) and (182.79,41) .. (185,41) .. controls (187.21,41) and (189,42.79) .. (189,45) .. controls (189,47.21) and (187.21,49) .. (185,49) .. controls (182.79,49) and (181,47.21) .. (181,45) -- cycle ;
%Shape: Circle [id:dp33534453881376414] 
\draw  [fill={rgb, 255:red, 0; green, 0; blue, 0 }  ,fill opacity=1 ] (221,45) .. controls (221,42.79) and (222.79,41) .. (225,41) .. controls (227.21,41) and (229,42.79) .. (229,45) .. controls (229,47.21) and (227.21,49) .. (225,49) .. controls (222.79,49) and (221,47.21) .. (221,45) -- cycle ;
\draw   (202.02,60.99) -- (208,64.76) -- (201.98,68.49) ;
%Shape: Circle [id:dp9882315947393738] 
\draw  [fill={rgb, 255:red, 0; green, 0; blue, 0 }  ,fill opacity=1 ] (261,45) .. controls (261,42.79) and (262.79,41) .. (265,41) .. controls (267.21,41) and (269,42.79) .. (269,45) .. controls (269,47.21) and (267.21,49) .. (265,49) .. controls (262.79,49) and (261,47.21) .. (261,45) -- cycle ;
%Shape: Circle [id:dp21231585433063405] 
\draw  [fill={rgb, 255:red, 0; green, 0; blue, 0 }  ,fill opacity=1 ] (301,45) .. controls (301,42.79) and (302.79,41) .. (305,41) .. controls (307.21,41) and (309,42.79) .. (309,45) .. controls (309,47.21) and (307.21,49) .. (305,49) .. controls (302.79,49) and (301,47.21) .. (301,45) -- cycle ;
\draw   (287.98,68.51) -- (282,64.74) -- (288.02,61.01) ;
%Shape: Circle [id:dp7010582188945775] 
\draw  [fill={rgb, 255:red, 0; green, 0; blue, 0 }  ,fill opacity=1 ] (342,45) .. controls (342,42.79) and (343.79,41) .. (346,41) .. controls (348.21,41) and (350,42.79) .. (350,45) .. controls (350,47.21) and (348.21,49) .. (346,49) .. controls (343.79,49) and (342,47.21) .. (342,45) -- cycle ;
%Shape: Circle [id:dp40483497123670276] 
\draw  [fill={rgb, 255:red, 0; green, 0; blue, 0 }  ,fill opacity=1 ] (382,45) .. controls (382,42.79) and (383.79,41) .. (386,41) .. controls (388.21,41) and (390,42.79) .. (390,45) .. controls (390,47.21) and (388.21,49) .. (386,49) .. controls (383.79,49) and (382,47.21) .. (382,45) -- cycle ;
\draw   (363,21) -- (369,24.75) -- (363,28.5) ;
%Shape: Circle [id:dp539033743818911] 
\draw  [fill={rgb, 255:red, 0; green, 0; blue, 0 }  ,fill opacity=1 ] (421,45) .. controls (421,42.79) and (422.79,41) .. (425,41) .. controls (427.21,41) and (429,42.79) .. (429,45) .. controls (429,47.21) and (427.21,49) .. (425,49) .. controls (422.79,49) and (421,47.21) .. (421,45) -- cycle ;
%Shape: Circle [id:dp8921585227989564] 
\draw  [fill={rgb, 255:red, 0; green, 0; blue, 0 }  ,fill opacity=1 ] (461,45) .. controls (461,42.79) and (462.79,41) .. (465,41) .. controls (467.21,41) and (469,42.79) .. (469,45) .. controls (469,47.21) and (467.21,49) .. (465,49) .. controls (462.79,49) and (461,47.21) .. (461,45) -- cycle ;
\draw   (448.03,29.48) -- (442,25.77) -- (447.97,21.98) ;
%Shape: Circle [id:dp09075056462535835] 
\draw  [fill={rgb, 255:red, 0; green, 0; blue, 0 }  ,fill opacity=1 ] (502,45) .. controls (502,42.79) and (503.79,41) .. (506,41) .. controls (508.21,41) and (510,42.79) .. (510,45) .. controls (510,47.21) and (508.21,49) .. (506,49) .. controls (503.79,49) and (502,47.21) .. (502,45) -- cycle ;
%Shape: Circle [id:dp16382578782604362] 
\draw  [fill={rgb, 255:red, 0; green, 0; blue, 0 }  ,fill opacity=1 ] (542,45) .. controls (542,42.79) and (543.79,41) .. (546,41) .. controls (548.21,41) and (550,42.79) .. (550,45) .. controls (550,47.21) and (548.21,49) .. (546,49) .. controls (543.79,49) and (542,47.21) .. (542,45) -- cycle ;
%Shape: Arc [id:dp05254901774458465] 
\draw  [draw opacity=0] (225.46,45.55) .. controls (225.46,45.6) and (225.46,45.65) .. (225.46,45.69) .. controls (225.3,56.25) and (215.95,64.67) .. (204.58,64.49) .. controls (193.21,64.32) and (184.12,55.62) .. (184.28,45.06) -- (204.87,45.38) -- cycle ; \draw   (225.46,45.55) .. controls (225.46,45.6) and (225.46,45.65) .. (225.46,45.69) .. controls (225.3,56.25) and (215.95,64.67) .. (204.58,64.49) .. controls (193.21,64.32) and (184.12,55.62) .. (184.28,45.06) ;  
%Shape: Arc [id:dp4836486398130462] 
\draw  [draw opacity=0] (306.18,45.49) .. controls (306.18,45.54) and (306.18,45.58) .. (306.18,45.63) .. controls (306.02,56.19) and (296.67,64.61) .. (285.3,64.43) .. controls (273.92,64.26) and (264.84,55.56) .. (265,45) -- (285.59,45.32) -- cycle ; \draw   (306.18,45.49) .. controls (306.18,45.54) and (306.18,45.58) .. (306.18,45.63) .. controls (306.02,56.19) and (296.67,64.61) .. (285.3,64.43) .. controls (273.92,64.26) and (264.84,55.56) .. (265,45) ;  
%Shape: Arc [id:dp9000900751215624] 
\draw  [draw opacity=0] (345.27,43.83) .. controls (345.27,43.78) and (345.27,43.73) .. (345.27,43.68) .. controls (345.43,33.19) and (354.67,24.82) .. (365.92,24.98) .. controls (377.17,25.15) and (386.17,33.79) .. (386.01,44.29) -- (365.64,43.99) -- cycle ; \draw   (345.27,43.83) .. controls (345.27,43.78) and (345.27,43.73) .. (345.27,43.68) .. controls (345.43,33.19) and (354.67,24.82) .. (365.92,24.98) .. controls (377.17,25.15) and (386.17,33.79) .. (386.01,44.29) ;  
%Shape: Arc [id:dp6361110206562262] 
\draw  [draw opacity=0] (425,45) .. controls (425,44.95) and (425,44.91) .. (425,44.86) .. controls (425.16,34.36) and (434.4,25.99) .. (445.65,26.16) .. controls (456.9,26.33) and (465.9,34.97) .. (465.74,45.46) -- (445.37,45.16) -- cycle ; \draw   (425,45) .. controls (425,44.95) and (425,44.91) .. (425,44.86) .. controls (425.16,34.36) and (434.4,25.99) .. (445.65,26.16) .. controls (456.9,26.33) and (465.9,34.97) .. (465.74,45.46) ;  

% Text Node
\draw (143,23.4) node [anchor=north west][inner sep=0.75pt]    {$-2$};
% Text Node
\draw (93,23.4) node [anchor=north west][inner sep=0.75pt]    {$2$};
% Text Node
\draw (4,23.4) node [anchor=north west][inner sep=0.75pt]    {$-2$};
% Text Node
\draw (71,23.4) node [anchor=north west][inner sep=0.75pt]    {$2$};
% Text Node
\draw (178,23.4) node [anchor=north west][inner sep=0.75pt]    {$1$};
% Text Node
\draw (206,23.4) node [anchor=north west][inner sep=0.75pt]    {$-1$};
% Text Node
\draw (246,23.4) node [anchor=north west][inner sep=0.75pt]    {$-1$};
% Text Node
\draw (298,23.4) node [anchor=north west][inner sep=0.75pt]    {$1$};
% Text Node
\draw (338,50.4) node [anchor=north west][inner sep=0.75pt]    {$1$};
% Text Node
\draw (367.64,50.4) node [anchor=north west][inner sep=0.75pt]    {$-1$};
% Text Node
\draw (405,50.4) node [anchor=north west][inner sep=0.75pt]    {$-1$};
% Text Node
\draw (458,50.4) node [anchor=north west][inner sep=0.75pt]    {$1$};
% Text Node
\draw (499,23.4) node [anchor=north west][inner sep=0.75pt]    {$0$};
% Text Node
\draw (539,23.4) node [anchor=north west][inner sep=0.75pt]    {$0$};

\end{tikzpicture}
\caption{The oriented spanning subgraphs for the one-loop bubble graph.}
\label{fig:1loop}
\end{center}
\end{figure}

\newpage 
\noindent Hence, the partial fraction decomposition is 
\begin{align*}
\psi^{\text{flat}}_{1\text{-loop}}\,=\, \frac{1}{\ell_2\ell_5}\,+\,\frac{1}{\ell_1\ell_5}\,-\,\frac{1}{\ell_1\ell_4}\,-\,\frac{1}{\ell_2\ell_4}\,-\,\frac{1}{\ell_1\ell_3}\,-\,\frac{1}{\ell_2\ell_3}\,+\,\frac{1}{\ell_1\ell_2} \, ,
\end{align*}
where the order of the terms matches the one of the spanning subgraphs in \Cref{fig:1loop}. The diagrams with at least one edge being deleted can be regarded as oriented spanning trees for the two-site chain after we redefine the variables associated to vertices. In physics, this recurrence is related to the so-called ``tree theorem''~\cite{CosmoTreeThm}. It is best illustrated by the example where we delete the upper edge from the bubble: 
\begin{equation*}
R \, \biggl(
\begin{tikzpicture}[scale=.419, transform shape,baseline=.1ex]
    \node[label=west:\huge{$X_1$},circle, draw, fill=black, scale=0.5] (X1) at (0,0.25) {X1};
    \node[label= east:\huge{$X_2$},circle, draw, fill=black, scale=0.5] (X2) at (2,0.25) {X2};
    \draw[thick] (0,0.25) arc (180:270:1);
    \draw[thick] (2,0.25) arc (180:90:-1);
    \draw[thick,->] (1.11,-.75)--(1.12,-.75);
    \node[draw=none,fill=none] at  (1.1,-0.1) {\huge $Y'$};
    \scoped[on background layer]
    \draw[thick,gray,dashed] (0,0.25) arc (180:0:1);
    \node[draw=none,fill=none,gray] at  (1.1,0.75) {\huge $Y$};
\end{tikzpicture}
\biggr) \, =\ 
R \, \biggl(
\begin{tikzpicture}[scale=.419, transform shape,baseline=.1ex]
    \node[label=north :\huge{$X_1+Y \ \  $},circle, draw, fill=black, scale=0.5] (X1) at (4,0) {X1}; 
\node[label=north:\huge{$\ \ X_2+Y$},circle, draw, fill=black, scale=0.5] (X2) at (6,0) {X2};
    \draw[thick,->] (4,0) -- (5.1,0);
    \draw[thick] (5,0) -- (6,0);
    \node[draw=none,fill=none] at (5,-0.7) {\huge$Y'$};
\end{tikzpicture}
\biggr)
\,=\,
\frac{1}{\ell_1 \ell_4} \, .
\end{equation*}
\end{example}

The primary motivation for applying partial fraction decomposition to the integrand is to facilitate and streamline the computation of the integral. The cosmological integral in \eqref{eq:cosmoint_explicit} would turn into the linear combination
\begin{equation}\label{eq:PF_integral}
    \sum_{H_i \,\in\, \mathcal{F}} (-1)^{n-1-|E(H_i)|} \ \cdot \! \int_{\RR^n_{>0}}  \prod_{j=1}^{s_i}  R_{H_i^j}(X+\alpha,Y)  \alpha^\eps \, \d \alpha \, .  
\end{equation}
The singularities of generalized Euler integrals with integrands involving only linear forms were studied in~\cite{FevolaMatsubara}. Specifically, the singularities characterize the locus where the Euler characteristic of the complement of the arrangement of hyperplanes in the integrand is smaller than the generic one.\footnote{This statement was communicated to us in conversations with S.-J.~Matsubara-Heo. It is getting addressed by him in ongoing work.}  
For real hyperplane arrangements, the signed Euler characteristic counts the number of bounded chambers.
Here, we focus on the complement of hyperplanes $\mathcal{H}^G_{(X,Y)}$ associated to cosmological integrals. In the case in which the coefficients of the hyperplanes vary in a subspace $Z$ of the coefficients space $\CC^A$, as in \Cref{sec:GKZ_restrictions}, the singular locus is described by the Euler discriminant~\cite{FMT24}. To give the definition, we first fix some notation. For each $z\in Z$, let 
\begin{align*}
\mathcal{H}^G(z)\, \coloneqq \, \left\{ \alpha\in(\CC^*)^n \,\, : \,\, L_i(z;\alpha) \neq 0, \  i=1,\dots,k \right\} \, ,
\end{align*}
and denote by $\chi_z$ the value of its signed Euler characteristic, $(-1)^n\cdot \chi(\mathcal{H}^G(z)).$

\pagebreak
\noindent The {\em Euler discriminant} is defined as the locus
\begin{align}\label{eq:Euler_discr}
\nabla_{\chi}(Z)\, = \, \{ z\in Z \,\, : \,\, \chi_z < \chi^*\}
\end{align}
for which the Euler characteristic is strictly smaller than its maximal value $\chi^* = \max_{z\in Z} \{\chi_z\}$. Note that $\nabla_{\chi}(Z)$ is a closed subvariety of $Z$, see \cite[Theorem~3.1]{FMT24}.

Let us represent each hyperplane $L_i$ as a scalar product between the vector $(\alpha,1)=(\alpha_1,\dots,\alpha_n,1)$ and the one describing the coefficients, denoted $T_i$. For instance, $L_1$ obtained from $\ell_1$ in~\eqref{eq:ell_i_3site} has the coefficient vector $T_1=(1,0,X_1+Y)$.

Consider the matrix $M'_G = [T_1 \,|\, \cdots\,|\, T_k]$ whose $i$-th column represents the coefficient vector of the hyperplane $L_i$. Let $M_G = [I_{n+1} \,|\, M'_G]$, where $I_{n+1}$ is the identity matrix of size~$n+1$. In~\cite{FevolaMatsubara}, it was proven that the Euler discriminant is a hypersurface and its defining polynomial $E_\chi(Z)$ can be expressed in terms of maximal minors of the matrix~$M_G$, namely
\begin{align*}
E_\chi(Z) \, = \, \prod_{\substack{I\subset [n+1+k],\,|I|=n+1\\ \det(M_G^I) \, \neq \, 0}} \det\big(M_G^I\big) \, ,
\end{align*}
where 
$M_G^I$ denotes the submatrix of $M_G$ whose columns are indexed by~$I$.

In~\cite{DEcosmological}, the singularities of the cosmological integrals of trees were read off from their differential equations in matrix form, and they can be described and characterized in terms of complete tubings of another type of decorated graphs. 
However, these singularities constitute a subset of the factors of the polynomial~$E_{\chi}(Z).$
In the physics literature, the linear factors occurring as entries of the $\operatorname{dlog}$'s in the connection matrix in $\eps$-factorized form as in~\cite[(3.30)]{DEcosmological} are called ``letters'' of the differential equation. In our example, all letters occur as factors of the singular locus. Moreover, all of these factors describe configurations of the kinematic variables for which the solution can develop a singularity that is expected from physical principles. These configurations are called \textit{physical singularities}, see e.g.~\cite{AnalyticWF}.

\begin{example}\label{ex:sing_3site}
    The singularities of the complements of hyperplanes $\mathcal{H}^G_{(X,Y)}$ associated to the $3$-site chain can be read from \cite[Example~5.4]{FMT24}. These in fact all correspond, up to a sign, to non-zero minors of the prescribed matrix $M_G$. However, the set of physical singularities are a subset of these, given by:
    \begin{align*}
X_1+&Y_{12},\,\,\,X_2+Y_{12}+Y_{23},\,\,\,X_3+Y_{23},\,\, \,X_1+X_2+X_3,\,\,\,X_1+X_2+Y_{23},\,\,\,X_2+X_3+Y_{12},\\
&\quad\quad\,\,\,\,\,X_1-Y_{12},\,\,\, X_2+Y_{12}-Y_{23},\,\,\, X_2-Y_{12}+Y_{23},\,\,\, X_2-Y_{12}-Y_{23},\\
&\qquad\qquad\qquad X_1+X_2-Y_{23},\,\,\, X_2+X_3-Y_{12},\,\,\,X_3-Y_{23}.
\end{align*}
We observed a connection between the terms in the partial fraction decomposition of the integral induced by~\eqref{eq:PF_3site} and this subset of singularities. 

In the remainder of this section, we will explain this relation in its full generality for arbitrary graphs.
\end{example}

\begin{definition}\label{def:tto}
    Given an element $H_i\in \mathcal{F}$, we say that $H_i$ is \textit{totally time-ordered} if the associated rational function $R_{H_i}$ is given by a single term.
\end{definition}
The motivation for the naming comes from cosmology: the totally time-ordered diagrams specify the time-ordering of all the vertices, and not just of the nearest neighbors. A counter-example would be the gray diagram in~\Cref{fig:adm}, for which the orientations are not sufficient to fix the ordering of the vertex $X_1$ with respect to~$X_3$. 
We observed for $n=2,3,4$ that all the physical singularities can be recovered by computing the singularities of the integrals in \eqref{eq:PF_integral} that correspond to totally time-ordered subgraphs. The example below provides a more explicit explanation of this observation. 

\begin{example}[\Cref{ex:sing_3site} continued]
    Let $n=3$. The partial fraction decomposition for the $3$-site chain is exhibited in \eqref{eq:PF_3site}. We can construct a matrix $[I_4\,\,| \,\, T_i\, T_j\, T_k]$ of size $4\times 7$ for each fraction of the form $1/(L_iL_jL_k)$ induced by the terms in \eqref{eq:PF_3site}. One can check that the physical singularities, displayed in \Cref{ex:sing_3site}, coincide with the minors of the matrices arising from the integrals associated to totally time-ordered graphs. 
\end{example}

\section{ Conclusion}\label{sec:outlook}
Cosmological integrals have interesting mathematical properties, and, despite having been studied in various guises over the past twenty years, have only recently been looked at systematically, in the physics literature. It is even earlier days in their mathematical exploration. In this article, we took some first steps in uncovering their properties, using some algebraic tools. 

Focusing on the integrals constructed in~\cite{DEcosmological}, we began to shed light on differential and difference equations satisfied by cosmological correlation functions, utilizing the Weyl algebra and shift algebras. For the two-site chain, we elaborated how the differential equations from~\cite{DEcosmological} can be obtained in terms of a restricted GKZ system. It deserves further study whether this connection holds true in greater generality, and it would be interesting to see which results from the theory of GKZ systems can be imported to handle cosmological integrals.

We constructed a multivariate version of partial fraction decomposition for the flat space wavefunction. This decomposition is described via spanning subgraphs of the modeling graph, which we conjecture to hold true for arbitrary graphs. This beautiful connection to subgraphs has an interesting physical interpretation in terms of spacetime locality of interactions between particles. We are likely just scratching the surface of an interesting connection between the wavefunction and decorated graphs---a theme that has appeared in various forms in the past few years. We also hope to be able to shed light on the observed ``dimension-drop'' in the matrix differential system of cosmological integrals. The dimension of the full space of master integrals is that of the top twisted cohomology space of the underlying very affine variety. In~\cite{DEcosmological}, it was observed that certain subspaces of master integrals can be chosen to obtain a closed subsystem of physically relevant ones; for the three-site graph, e.g., $16$ among originally~$25$. One should explore whether our method for partial fractioning of the flat space wavefunction can contribute to explaining this phenomenon.

There are several immediate directions for further exploration and development of our findings. It is clear that cosmological integrals still have many mathematical features yet to be fully understood, and the necessary tools to understand them will be very useful to physicists. Conversely, the examples from physics might suggest new mathematical structures, which are interesting and deserve of attention on their own right. 

\bigskip

\noindent {\bf Acknowledgements.} We thank Frédéric Chyzak for kindly providing an implementation of the algebraic Mellin transform in Maple, 
Daniel L.\ Bath, María Cruz Fernández-Fernández, Thomas Grimm, Arno Hoefnagels, Viktor Levandovskyy, Saiei-Jaeyeong Matsubara-Heo, and Uli Walther for insightful discussions, 
and Michael Borinsky, Anaëlle Pfister, and Jaroslav Trnka for their helpful comments to improve our manuscript.

\medskip
\noindent {\bf Funding statement.}
CF and ALS thank Scuola Normale Superiore for the generous support during a research visit. CF has received funding from the
European Union’s Horizon 2020 research and innovation programme under the Marie Sk\l odowska-Curie grant agreement No 101034255. 
The research of GLP and TW is supported by the ERC (NOTIMEFORCOSMO, 101126304), and that of ALS is funded by UNIVERSE+ (ERC, UNIVERSEPLUS, 101118787). Both projects are funded by the European Union  Views and opinions expressed are, however, those of the author(s) only and do not necessarily reflect those of the European Union or the European Research Council Executive Agency. Neither the European Union nor the granting authority can be held responsible for them. 
GLP and TW are moreover supported by Scuola Normale and by INFN (IS GSS-Pi). GLP is further supported by a Rita-Levi Montalcini Fellowship from the Italian Ministry of Universities and Research (MUR).

\begin{small}

\end{small}

\end{document}